\newtheorem{thm}{Theorem}[section]
\newtheorem*{thm*}{Theorem}
\newtheorem*{thmA*}{Theorem A}
\newtheorem*{thmB*}{Theorem B}
\newtheorem{lem}[thm]{Lemma}
\newtheorem{prop}[thm]{Proposition}
\newtheorem{cor}[thm]{Corollary}
\newtheorem*{cor*}{Corollary}
\theoremstyle{definition}
\newtheorem{rmk}[thm]{Remark}
\newtheorem{defn}[thm]{Definition}
\newtheorem{ex}[thm]{Example}
\newtheorem{conj}[thm]{Conjecture}
\newtheorem{construction}[thm]{Construction}
\numberwithin{equation}{section}
\newcommand{\lemm}[1]{\begin{lem}#1\end{lem}}
\newcommand{\defi}[1]{\begin{defn}#1\end{defn}}
\newcommand{\pf}[1]{\begin{proof}#1\end{proof}}
\newcommand{\eqn}[1]{\begin{equation}#1\end{equation}}
\newcommand{\overbar}[1]{\mkern 1.5mu\overline{\mkern-1.5mu#1\mkern-1.5mu}\mkern 1.5mu}
\newcommand{\lr}{\longrightarrow}
\newcommand{\mr}{\mathrm}
\newcommand{\wt}{\widetilde}
\newcommand{\et}{\mathrm{et}}
\newcommand{\Orb}{\operatorname{Orb}}
\newcommand{\diag}{\operatorname{diag}}
\newcommand{\Vol}{\operatorname{Vol}}
\newcommand{\End}{\operatorname{End}}
\newcommand{\Lie}{\operatorname{Lie}}
\newcommand{\Spec}{\operatorname{Spec}}
\newcommand{\Spf}{\operatorname{Spf}}
\newcommand{\Gal}{\operatorname{Gal}}
\newcommand{\tensor}{\otimes}
\newcommand{\iso}{\cong}
\newcommand{\mbC}{\mathbb{C}}
\newcommand{\mbF}{\mathbb{F}}
\newcommand{\mbX}{\mathbb{X}}
\newcommand{\mbY}{\mathbb{Y}}
\newcommand{\mbZ}{\mathbb{Z}}
\newcommand{\mcI}{\mathcal{I}}
\newcommand{\mcL}{\mathcal{L}}
\newcommand{\mcM}{\mathcal{M}}
\newcommand{\mcN}{\mathcal{N}}
\newcommand{\mcO}{\mathcal{O}}
\newcommand{\mcZ}{\mathcal{Z}}
\newcommand{\sL}{\mathcal{L}}
\let\save@mathaccent\mathaccent
\newcommand*\if@single[3]{%
  \setbox0\hbox{${\mathaccent"0362{#1}}^H$}%
  \setbox2\hbox{${\mathaccent"0362{\kern0pt#1}}^H$}%
  \ifdim\ht0=\ht2 #3\else #2\fi
  }
\newcommand*\rel@kern[1]{\kern#1\dimexpr\macc@kerna}
\newcommand*\widebar[1]{\@ifnextchar^{{\wide@bar{#1}{0}}}{\wide@bar{#1}{1}}}
\newcommand*\wide@bar[2]{\if@single{#1}{\wide@bar@{#1}{#2}{1}}{\wide@bar@{#1}{#2}{2}}}
\newcommand*\wide@bar@[3]{%
  \begingroup
  \def\mathaccent##1##2{%
    \let\mathaccent\save@mathaccent
    \if#32 \let\macc@nucleus\first@char \fi
    \setbox\z@\hbox{$\macc@style{\macc@nucleus}_{}$}%
    \setbox\tw@\hbox{$\macc@style{\macc@nucleus}{}_{}$}%
    \dimen@\wd\tw@
    \advance\dimen@-\wd\z@
    \divide\dimen@ 3
    \@tempdima\wd\tw@
    \advance\@tempdima-\scriptspace
    \divide\@tempdima 10
    \advance\dimen@-\@tempdima
    \ifdim\dimen@>\z@ \dimen@0pt\fi
    \rel@kern{0.6}\kern-\dimen@
    \if#31
      \overline{\rel@kern{-0.6}\kern\dimen@\macc@nucleus\rel@kern{0.4}\kern\dimen@}%
      \advance\dimen@0.4\dimexpr\macc@kerna
      \let\final@kern#2%
      \ifdim\dimen@<\z@ \let\final@kern1\fi
      \if\final@kern1 \kern-\dimen@\fi
    \else
      \overline{\rel@kern{-0.6}\kern\dimen@#1}%
    \fi
  }%
  \macc@depth\@ne
  \let\math@bgroup\@empty \let\math@egroup\macc@set@skewchar
  \mathsurround\z@ \frozen@everymath{\mathgroup\macc@group\relax}%
  \macc@set@skewchar\relax
  \let\mathaccentV\macc@nested@a
  \if#31
    \macc@nested@a\relax111{#1}%
  \else
    \def\gobble@till@marker##1\endmarker{}%
    \futurelet\first@char\gobble@till@marker#1\endmarker
    \ifcat\noexpand\first@char A\else
      \def\first@char{}%
    \fi
    \macc@nested@a\relax111{\first@char}%
  \fi
  \endgroup
}
\newcommand{\ord}{\mathrm{ord}}
\renewcommand{\O}{O}
\DeclarePairedDelimiter\abs{\lvert}{\rvert}%
\DeclarePairedDelimiter\norm{\lVert}{\rVert}%
\let\oldabs\abs
\def\abs{\@ifstar{\oldabs}{\oldabs*}}
\let\oldnorm\norm
\def\norm{\@ifstar{\oldnorm}{\oldnorm*}}
\newcommand{\del}{\operatorname{\partial Orb}}
\begin{document}

\title{A linear AFL for quaternion algebras}
\author{Nuno Hultberg and Andreas Mihatsch}
\date{\today}

\address{Nuno Hultberg, University of Copenhagen, Department of Mathematical Sciences, Universitetsparken 5, 2100 København, Denmark}
\email{nh@math.ku.dk}
\address{Andreas Mihatsch, Universität Bonn, Mathematisches Institut, Endenicher Allee 60, 53115 Bonn, Germany}
\email{mihatsch@math.uni-bonn.de}

\begin{abstract}
We prove new fundamental lemma and arithmetic fundamental lemma identities for general linear groups over quaternion division algebras. In particular, we verify the transfer conjecture and the arithmetic transfer conjecture from \cite{LM2} in cases of Hasse invariant $1/2$.
\end{abstract}

\maketitle

\tableofcontents

\newcommand{\Inv}{\mr{Inv}}
\newcommand{\quand}{\quad \text{and}\quad}
\newcommand{\Iw}{\mr{Iw}}
\newcommand{\Par}{\mr{Par}}
\newcommand{\bdot}{\,\cdot\,}
\newcommand{\rs}{\mr{rs}}

\section{Introduction}

Fix a non-archimedean local field $F$ and some $n\geq 1$. The linear arithmetic fundamental lemma (AFL) conjecture of Q. Li \cite{Li} states a family of identities between derivatives of orbital integrals on $GL_{2n}(F)$ and intersection numbers on moduli spaces of strict formal $O_F$-modules of height $2n$. It has a global motivation which is parallel to that of W. Zhang's unitary AFL \cite{Z} and which is related to the trace formula comparison of Leslie--Xiao--Zhang \cite{LXZ}. We refer to \cite{Z_survey_18} and the introduction of \cite{LM2} for this global aspect and henceforth focus on the local setting.

While the unitary AFL has been proved \cite{Z19, M_loc_const, MZ, ZZ}, the linear AFL conjecture is still open. It is, however, known to hold when $n\leq 2$, see \cite{Li_GL4}. Moreover, both the conjecture and its validity for $n\leq 2$ have been extended to a biquadratic setting by Howard--Li \cite{HL} and Li \cite{Li_future}. A non-basic version of both the linear and the biquadratic AFL is formulated and reduced to the basic setting in \cite{LM1}.

The linear AFL concerns orbital integrals for hyperspecial test functions and moduli spaces for $GL_{2n}$ for hyperspecial level (good reduction). In a recent article, Li and the second author formulated a variant that relates parahoric test functions on the analytic side with moduli spaces for central simple algebras on the intersection-theoretic side.

In the present article, we consider this variant in the case of Hasse invariant $1/2$. More precisely, we prove a fundamental lemma (FL) type and an AFL type statement for the group $GL_n(B)$ where $B/F$ is a quaternion division algebra. We now state these results in a vague form, together with references to their precise formulations.

\begin{thmA*}[Fundamental Lemma, see Theorem \protect{\ref{thm:FL}}]
Let $γ\in GL_{2n}(F)$ and $g\in GL_n(B)$ be regular semi-simple matching elements. Then
$$\Orb(γ, 1_\Par, 0) = \Orb(g, 1_{GL_n(O_B)}).$$
\end{thmA*}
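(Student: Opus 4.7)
The plan is to prove the identity by interpreting both sides as weighted counts of lattices (or lattice chains) and establishing a direct bijection, using an unramified quadratic extension $F'/F$ that splits $B$.

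\textbf{Step 1 (lattice interpretations).} I would first rewrite $\Orb(g, 1_{GL_n(O_B)})$ as a weighted sum over $g$-stable $O_B$-lattices $\Lambda \subset B^n$, with weights given by inverse stabilizer volumes; by regular semi-simplicity this is a finite sum. Similarly, specializing at the parameter $s=0$, the integral $\Orb(\gamma, 1_\Par, 0)$ reduces to the ordinary orbital integral of $1_\Par$, and I would express it as a weighted count of $\gamma$-stable lattice chains in $F^{2n}$ of the precise type encoded by the parahoric $\Par$ from \cite{LM2}. Recall that $\Par$ is the stabilizer of a lattice pair $L \supset L'$ with $\length(L/L') = n$ (the ``middle'' parahoric, matching the Hasse invariant $1/2$ of $B$).

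\textbf{Step 2 (splitting and descent).} Let $F'/F$ be unramified of degree $2$, splitting $B$, and let $\sigma$ generate $\Gal(F'/F)$. Under the identification $B \otimes_F F' \cong M_2(F')$, together with the corresponding identification $O_B \otimes_{O_F} O_{F'}$ with an Iwahori-type order in $M_2(F')$, the group $GL_n(B)$ becomes the $\sigma$-fixed points of $GL_{2n}(F')$ for a suitable twisted involution, and $O_B$-lattices in $B^n$ correspond bijectively to $\sigma$-equivariant $O_{F'}$-lattice chains of the type $(L \supset L')$ with $\length(L/L') = n$. The key observation is that the parahoric $\Par$ has been defined in \cite{LM2} precisely so that, after base change to $F'$, its lattice chain type is this same pair. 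Thus Galois descent produces a bijection between $O_B$-lattices in $B^n$ and $\Par$-type lattice chains in $F^{2n}$.

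\textbf{Step 3 (matching and weight comparison).} For matching regular semi-simple elements $\gamma$ and $g$, their reduced characteristic polynomials coincide, so they become conjugate inside $GL_{2n}(F')$. Under the bijection of Step 2, the conditions ``$g$-stable $O_B$-lattice in $B^n$'' and ``$\gamma$-stable $\Par$-type lattice chain in $F^{2n}$'' correspond. One then checks that the stabilizer volumes agree under the bijection (both can be computed from the centralizer of $\gamma$ inside $GL_{2n}(F')$, restricted to Galois invariants). Summing gives the stated identity.

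\textbf{Main obstacle.} The most delicate point is making the Galois descent compatible with stabilizer weights when $\gamma$ is not elliptic, i.e. when its characteristic polynomial factors nontrivially over $F$. In this case the centralizer is a nontrivial Levi and the identification of stabilizers requires an additional descent, most naturally handled by reducing along the parabolic decomposition and inducting on $n$. A second technical point is checking that the twisted involution induced by the splitting of $B$ coincides with the one implicitly used to define the parahoric $\Par$ in \cite{LM2}; without this normalization the bijection would fail to respect stability.
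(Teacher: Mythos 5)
There is a genuine gap, and it starts already in Step 1: both orbital integrals in this theorem are \emph{relative} (double-coset) orbital integrals for symmetric pairs, not conjugation orbital integrals. On the right, the integration runs over $H\times H$ with $H = GL_n(E)$, so the natural lattice-counting interpretation is in terms of $O_E$-lattices $\Lambda\subset E^n$ satisfying $x\sigma(\Lambda)\subseteq\Lambda$ (writing $g = 1 + x\varpi$), \emph{not} in terms of $g$-stable $O_B$-lattices in $B^n$. On the left, the integration runs over $H'\times H'$ with $H' = GL_n(F)\times GL_n(F)$, and the count is over pairs of $O_F$-lattices $(\Lambda_+,\Lambda_-)$ in $F^n$ with $(x/\pi)\Lambda_-\subseteq\Lambda_+\subseteq\Lambda_-$, not over $\gamma$-stable parahoric chains in $F^{2n}$. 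Your Galois-descent bijection in Step 2 relates objects ($O_B$-lattices in $B^n$ versus $\sigma$-equivariant chains over $F'$) that do not appear in either integral. Moreover, the left-hand side carries the character $\eta(h_2)$, so at $s=0$ it is a \emph{signed} count (the transfer factor $\Omega(\gamma,\Lambda,0)=\pm1$), while the right-hand side is an unsigned count; no direct bijection can equate them without explaining the cancellation of signs, and your proposal never mentions the signs.

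More fundamentally, a ``direct bijection'' between the two sides would amount to an elementary reproof of the Guo--Jacquet fundamental lemma, which is a deep input here rather than something to be rederived. The paper's actual route is a reduction: one splits $O_F[z_\gamma^2/\pi]$ by Hensel's lemma into a part where $z_\gamma^2/\pi$ is topologically nilpotent and a part where it is a unit, and the orbital integrals factor accordingly. On the topologically nilpotent part, the lattice-counting conditions for Hasse invariant $1/2$ and $0$ literally coincide after rescaling ($\Orb(\gamma,f'_{1/2},s)=\Orb(\wt\gamma,f'_0,s)$ and $\Orb(g,f_{1/2})=\Orb(\wt g,f_0)$), and one invokes the known $\lambda=0$ case (Guo--Jacquet). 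On the unit part, both sides collapse to a conjugation orbital integral on $GL_n(F)$ and a $\sigma$-twisted orbital integral on $GL_n(E)$ respectively, and one invokes the quadratic base change FL of Kottwitz. Your proposal contains neither reduction and neither external input, so as written it does not yield a proof.
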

Here, the orbital integral on the left is with respect to the subgroup $GL_n(F\times F)$. The one on the right is with respect to the subgroup $GL_n(E)$ for a fixed embedding $E\to B$ of an unramified quadratic extension $E/F$. The test function $1_\Par$ on the left hand side is the indicator function of the standard $(n\times n)$-block parahoric subgroup of $GL_{2n}$. Precise definitions will be given in \S\ref{s:results}.

In order to state our arithmetic result, we also need the datum of a $1$-dimensional strict $O_E$-module $\mbY$ of $O_E$-height $n$. The Serre tensor construction $O_B\tensor_{O_E} \mbY$ then has dimension $2$ and $O_F$-height $4n$.

\begin{thmB*}[Arithmetic Fundamental Lemma, see Theorem \protect{\ref{thm:main}}]
Let $γ\in GL_{2n}(F)$ and $g\in \mr{Aut}^0_B(O_B\tensor_{O_E} \mbY)$ be regular semi-simple matching elements. Assume that the linear AFL holds for $\mbY$. Then
\begin{equation}\label{eq:afl_intro}
\left.\frac{d}{ds}\right\vert_{s = 0} \Orb(γ, 1_\Par, s) = 2\, \mr{Int}(g) \log(q).
\end{equation}
\end{thmB*}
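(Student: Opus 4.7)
The strategy is to reduce Theorem B to the assumed (hyperspecial) linear AFL for $\mbY$ by matching parallel decompositions on the analytic and geometric sides; note that Theorem A already secures the matching of orbital integrals at $s=0$, so only the $s$-derivative needs to be controlled. Since the two sides of \eqref{eq:afl_intro} are additive in the test function and the intersection cycle respectively, it suffices to transport an additive decomposition across the matching.

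On the analytic side, the $(n\times n)$-block parahoric $\Par$ is an intersection of two hyperspecial maximal parahorics of $GL_{2n}(F)$, and its indicator $1_\Par$ lies in a Hecke subalgebra that I would rewrite, via standard Hecke-theoretic manipulations, as a $\mbZ$-linear combination $1_\Par = \sum_i c_i\, 1_{K_0 \mu_i K_0}$ with $K_0 = GL_{2n}(O_F)$ and suitable minuscule cocharacters $\mu_i$. Plugging this into $\Orb(\gamma, 1_\Par, s)$ and differentiating at $s=0$ rewrites the left-hand side of \eqref{eq:afl_intro} as a $\mbZ$-linear combination of derivatives $\partial_s|_{s=0}\Orb(\gamma, 1_{K_0 \mu_i K_0}, s)$, each of which is computed, by the assumed linear AFL for $\mbY$, as an intersection number on the hyperspecial $GL_{2n}$-RZ moduli of $\mbY$ (regarded as an $O_F$-module of height $2n$).

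On the geometric side, the Serre tensor $O_B\otimes_{O_E}\mbY$ decomposes, as an $O_E$-module, into two summands both $O_E$-isomorphic to $\mbY$ (the second via the Frobenius twist $\mbY^\sigma \iso \mbY$), with the $O_B$-structure encoded by the $\sigma$-semilinear swap induced by a uniformizer $\Pi\in O_B$. This identifies $\mr{Aut}^0_B(O_B\otimes_{O_E}\mbY)$ with a $\Pi$-fixed locus in a product of hyperspecial $O_E$-RZ data, and allows one to express $\mr{Int}(g)$ as a combination of intersection numbers on the hyperspecial RZ space of $\mbY$. The factor of $2$ on the right of \eqref{eq:afl_intro} is expected to emerge here, reflecting the $O_E$-rank $2$ of $O_B$ and the appearance of both summands in the intersection calculation.

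The main obstacle is the term-by-term matching: one must verify that the Hecke-algebraic coefficients $c_i$ correspond precisely to the geometric multiplicities in the stratification of the quaternion RZ space arising from $O_B\otimes_{O_E}\mbY = \mbY \oplus \mbY^\sigma$, and that the combinatorics aligns so as to produce exactly the factor of $2$. Once this matching is set up, summing the instances of the linear AFL for $\mbY$ across the terms yields \eqref{eq:afl_intro}.
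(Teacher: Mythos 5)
There is a genuine gap at the heart of your analytic step. The function $1_\Par$ is bi-invariant under $\Par$ but not under $K_0=GL_{2n}(O_F)$, while any $\mbZ$-linear combination $\sum_i c_i 1_{K_0\mu_i K_0}$ is $K_0$-bi-invariant; since $\Par\subsetneq K_0$, no such identity $1_\Par=\sum_i c_i 1_{K_0\mu_i K_0}$ can hold. Even if you replaced it by an identity of orbital integrals, the linear AFL you are allowed to assume concerns only the unit element $1_{GL_{2n}(O_F)}$, not the functions $1_{K_0\mu_i K_0}$, so the term-by-term invocation of the AFL is not available. Your account of the factor $2$ is also off: in the paper it is already present in the statement of the $\lambda=0$ linear AFL and is simply inherited; the geometric comparison produces an exact equality $\mr{Int}(g)=\mr{Int}(\wt g)$ with no extra multiplicity coming from the rank-$2$ Serre tensor decomposition $\mbX=\mbY\oplus\varpi\mbY$.

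The paper's actual mechanism is a rescaling of the invariant rather than a decomposition of the test function. When $z_\gamma^2/\pi$ is topologically nilpotent, a bijection of lattice sets together with a transfer-factor computation gives $\Orb(\gamma,f'_{1/2},s)=\Orb(\wt\gamma,1_{GL_{2n}(O_F)},s)$ for a \emph{different} element $\wt\gamma$ with $\Inv(\wt\gamma;T)=\pi^{-n}\Inv(\gamma;\pi T)$; on the geometric side one writes $g=\left(\begin{smallmatrix}x&\pi y\\ y&x\end{smallmatrix}\right)$, sets $\wt g=\left(\begin{smallmatrix}x&y\\ y&x\end{smallmatrix}\right)$, and shows $\mcI(g)=\mcZ(y)=\mcI(\wt g)$ as closed formal subschemes of $\mcN$, whence $\mr{Int}(g)=\mr{Int}(\wt g)$. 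The linear AFL for $\mbY$ is then applied once, to the pair $(\wt\gamma,\wt g)$. Your sketch also omits the edge case where $z_\gamma^2/\pi$ is integral but not topologically nilpotent: the paper splits $\gamma=(\gamma_0,\gamma_1)$ and $g=(g_0,g_1)$ via Hensel's lemma, shows the derivative of the unit factor vanishes, factors $\mr{Int}(g)=\mr{Int}(g_0)\Orb(g_1,1_{GL_{n_1}(O_B)})$, and matches the unit factors by the fundamental lemma (Theorem A). Without the rescaling step and this factorization, the reduction to the hyperspecial AFL does not go through.
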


We refer to \S\ref{ss:def_intersection} for the definition of the intersection number $\mr{Int}(g)$. Using that the linear AFL is known for all $\mbY$ whose connected factor has $O_E$-height $\leq 2$ we obtain the following corollary. 

\begin{cor*}[see Corollary \ref{cor:AFL_low_dim}]
Assume that the connected factor of $\mbY$ has $O_E$-height $\leq 2$. Then, for every pair of regular semi-simple matching elements $γ\in GL_{2n}(F)$ and $g\in \mr{Aut}^0_B(O_B\tensor_{O_E} \mbY)$,
$$\left.\frac{d}{ds}\right\vert_{s = 0} \Orb(γ, 1_\Par, s) = 2\, \mr{Int}(g) \log(q).$$
\end{cor*}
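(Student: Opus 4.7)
The plan is to derive this directly from Theorem B. The only task is to verify the hypothesis of that theorem, namely that the linear AFL holds for $\mbY$ in the specified range.

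First, I would split $\mbY$ via the connected-étale sequence as $\mbY \iso \mbY^\circ \times \mbY^{\mr{et}}$, where $\mbY^\circ$ is the connected factor, of $O_E$-height $n_0 \leq 2$, and $\mbY^{\mr{et}}$ is the étale factor. Second, I would invoke a non-basic reduction, parallel to the one carried out in \cite{LM1} within the biquadratic framework of \cite{HL, Li_future}, to reduce the linear AFL for $\mbY$ to the linear AFL for its connected factor $\mbY^\circ$ alone. Third, in the range $n_0 \leq 2$ the linear AFL for $\mbY^\circ$ is established: it is vacuous for $n_0 = 0$, a direct check for $n_0 = 1$, and follows from \cite{Li_GL4} (in the biquadratic form of \cite{HL, Li_future}) when $n_0 = 2$. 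Plugging this input into Theorem B then yields the desired identity.

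The only step with genuine content is the non-basic reduction in the middle paragraph. The expected obstacle is to confirm that this reduction from general $\mbY$ to the connected factor $\mbY^\circ$ goes through in precisely the form the hypothesis of Theorem B requires, i.e.\ so that the reduced statement is exactly the known input from \cite{Li_GL4}. Since the reduction technique is by now standard after \cite{LM1}, I would expect this to amount to bookkeeping rather than a new calculation.
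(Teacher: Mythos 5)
Your proposal is correct and takes essentially the same route as the paper: the corollary is immediate from Theorem \ref{thm:main}(2) once one knows the linear AFL ($\lambda=0$) for all $\mbY$ with $n^\circ\leq 2$, which is exactly the content of the cited \cite[Corollary 1.3]{LM1}. The only (harmless) deviation is that the non-basic reduction you propose to carry out yourself in your middle step is already packaged in that citation, and the biquadratic results of \cite{HL, Li_future} are not the relevant input here (the present setting has $K=F\times F$ split and $E/F$ unramified, and note $n^\circ\geq 1$ always since $\mbY$ is one-dimensional, so your $n_0=0$ case does not occur).
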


In particular, this verifies new cases of the arithmetic transfer (AT) conjecture \cite[Conjecture 1.5]{LM2}. We remark that \cite[Conjecture 1.5]{LM2} involves an unspecified correction term. Identity \eqref{eq:afl_intro} shows that we expect this term to vanish for Hasse invariant $1/2$.

Our proofs of Theorems A and B are by reduction to the Guo--Jacquet FL \cite{Guo}, the quadratic base change FL, and the linear AFL. On the orbital integral side, this reduction relies on a combinatorial interpretation in terms of lattice counts. On the intersection-theoretic side, it relies on a moduli-theoretic translation between the intersection problems for $GL_{2n}(F)$ and $GL_n(B)$.

\section{Statement of Results}
\label{s:results}

The definitions and conventions that follow are taken from \cite[\S2 -- \S4]{LM2}, but specialized to general linear groups over quaternion algebras.
We begin this section by recalling the invariant polynomial (\S\ref{ss:groups}) and by fixing our setting (\S\ref{ss:setting}). Next, we give the definitions of the relevant orbital integrals (\S\ref{ss:orb_int}) and intersection numbers (\S\ref{ss:def_intersection}). Then we state our main results (\S\ref{ss:results}).

\subsection{Regular semi-simple orbits}
\label{ss:groups}
We will consider several instances of the following kind. Let $F$ be a field and let $E/F$ be an étale quadratic extension with Galois conjugation $σ$. Let $D$ be a central simple algebra (CSA) over $F$ of degree $2n$ together with an embedding $E\to D$. Let $C = \mr{Cent}_E(D)$ be the centralizer which is a CSA over $E$ of degree $n$.

In this situation, $C^\times \times C^\times$ acts on $D^\times$ by $(h_1,h_2)\cdot g = h_1^{-1}gh_2$. An element $g\in D^\times$ is called regular semi-simple if its orbit for this action is Zariski closed and its stabilizer of minimal dimension. (This minimal dimension is equal to $n$.) According to Jacquet--Rallis \cite{JR} and Guo \cite[\S1]{Guo}, the regular semi-simple orbits can be characterized as follows:

Let $D = D_+\oplus D_-$ be the decomposition into $E$-linear and $E$-conjugate linear components. That is, $D_+ = C$ and $D_- = \{x\in D \mid xa = σ(a)x\text{ for }a\in E\}$. Let $g = g_++g_-$ denote the corresponding decomposition of an element $g\in D^\times$. For $g\in D^\times$ with $g_+\in D^\times$, set $z_g = g_+^{-1}g_-$.
\begin{defn}\label{def:invariant}
Assume that $g\in D^\times$ with $g_+\in D^\times$. The reduced characteristic polynomial $\mr{charred}_{D/F}(z_g^2; T) \in F[T]$ is always a square. Define the invariant of $g$ as its unique monic square root,
\begin{equation}
\Inv(g; T) := \mr{charred}_{D/F}(z_g^2;T)^{1/2} \in F[T].
\end{equation}
\end{defn}
The polynomial $\Inv(g;T)$ is monic, of degree $n$, and satisfies $\Inv(g;1) \neq 0$. An element $g\in D^\times$ is regular semi-simple if and only if both $g_+, g_-$ lie in $D^\times$ and $\Inv(g;T)$ is a separable polynomial. The invariant polynomial classifies orbits in the sense that for two regular semi-simple elements $g_1, g_2\in D^\times$,
$$C^\times g_1 C^\times = C^\times g_2 C^\times\quad \Longleftrightarrow\quad \Inv(g_1; T) = \Inv(g_2;T).$$

\subsection{Setting and notation}
\label{ss:setting}
For the rest of this article, we fix an integer $n\geq 1$ and a non-archimedean local field $F$ with uniformizer $\pi$. Let $q$ denote its residue field cardinality, $v$ its normalized valuation and $|x| = q^{-v(x)}$ its normalized absolute value.

We denote by $K = F\times F$ the split quadratic extension of $F$. The diagonal embedding $K\to M_{2n}(F),\ (a,b)\mapsto \mr{diag}(a\cdot 1_n, b\cdot 1_n)$ is of the kind considered in \S\ref{ss:groups} and has centralizer $M_n(K)$. Let
$$(G',\ H') = (GL_{2n}(F),\ GL_n(K))$$
be the corresponding pair of linear groups. For $γ\in G'$, the notion of being regular semi-simple and the invariant $\Inv(γ;T)$ are meant with respect to the $(H'\times H')$-action. Let $G'_{\rs}\subset G'$ denote the set of regular semi-simple elements.

We denote by $E/F$ an unramified quadratic extension, by $σ\in \mr{Gal}(E/F)$ the non-trivial element, and by $η:F^\times \to \{\pm 1\}$, $η(x) = (-1)^{v(x)}$ the character associated to $E$. Let $O_F$ and $O_E$ be the rings of integers in $F$ and $E$.

Let $B_λ/F$ be a quaternion algebra over $F$ of Hasse invariant $λ\in \{0,\, 1/2\}$. For both possibilities $B\in \{B_0, B_{1/2}\}$, we fix a maximal order $O_B\subset B$ and an embedding $O_E \to O_B$. The resulting diagonal embedding $E \to M_n(B)$ is of the type considered in \S\ref{ss:groups} and has centralizer $M_n(E)$. Let
$$(G_λ,\ H) = (GL_n(B_λ),\ GL_n(E))$$
be the corresponding pair of linear groups. For $g\in G_λ$, the notion of being regular semi-simple and the invariant $\Inv(g; T)$ are meant with respect to the $(H\times H)$-action. Let $G_{λ,\rs}\subset G_λ$ be the subset of regular semi-simple elements.

\subsection{Orbital integrals}
\label{ss:orb_int}

\subsubsection{The case of $G'$}
We define two characters $η$ and $|\bdot|$ on $H'$ by
$$η,\ |\bdot|:H'\lr \mbC^\times, \quad η((a,b)) := η(\det(ab^{-1})),\quad |(a,b)| := |\det(ab^{-1})|.$$
For a regular semi-simple element $γ\in G'_{\rs}$, we denote by 
\begin{equation}\label{eq:stabilizer}
(H'\times H')_γ = \{(h_1, h_2) \in H'\times H' \mid h_1γ = γh_2\}
\end{equation}
the stabilizer of $γ$. Set $L_γ = F[z_γ^2]$ which is an étale extension of degree $n$ of $F$ because $γ$ is regular semi-simple. There is the identity $L_γ^\times = H'\cap γ^{-1}H'γ$, so $(H'\times H')_γ$ can be identified with the torus $L_γ^\times$. We normalize the Haar measures on $H'\times H'$ and $(H'\times H')_γ$ by
\begin{equation}\label{eq:Haar_1}
\mr{Vol}(GL_n(O_K)) = \mr{Vol}(O_{L_γ}^\times) = 1.
\end{equation}
For a regular semi-simple element $γ\in G'_{\rs}$, a test function $f'\in C^\infty_c(G')$ and a complex parameter $s\in \mbC$, we can now define the orbital integral
\begin{equation}\label{eq:def_orb_int_1}
\Orb(γ,f',s) := \Omega(γ, s)\cdot  \int_{\frac{H'\times H'}{(H'\times H')_γ}} f'(h_1^{-1} γ h_2) |h_1h_2|^s η(h_2) d(h_1,h_2).
\end{equation}
Here, the so-called transfer factor $\Omega(γ, s)\in \pm q^{\mbZ\cdot s}$ ensures that the definition only depends on the orbit $H'γH'$. It is defined by
$$\Omega\left(\begin{pmatrix}
a & b \\ c & d
\end{pmatrix}, s\right) = η(\det(cd^{-1}))\cdot |\det(b^{-1}c)|^s$$
and satisfies $\Omega(h_1^{-1}γh_2, s) = |h_1h_2|^sη(h_2)\Omega(γ, s).$ The definition in \eqref{eq:def_orb_int_1} moreover relies on \cite[Lemma 3.2.3.]{HL} which states that $η$ and $|\bdot|$ have trivial restriction to $(H'\times H')_γ$. We will be interested in the central value and the central derivative only, so we define
\begin{equation}\label{eq:orb_int_central}
\Orb(γ, f') := \Orb(γ, f', 0)\quand \del (γ, f') := \left.\frac{d}{ds}\right\vert_{s = 0}\Orb(γ, f', s).
\end{equation}

\subsubsection{The case of $G_λ$}
Let $G \in \{ G_0, G_{1/2}\}$ be one of the two possible groups. For a regular semi-simple element $g\in G_{\rs}$, we denote by $(H\times H)_g$ its stabilizer. It can be identified with the torus $L_g^\times$, where $L_g = F[z_g^2]$ as before. We normalize the Haar measures on $H\times H$ and $(H\times H)_g$ by
\begin{equation}\label{eq:Haar_2}
\mr{Vol}(GL_n(O_E)) = \mr{Vol}(O_{L_g}^\times) = 1.
\end{equation}
For a regular semi-simple element $g\in G_{\rs}$ and a test function $f\in C^\infty_c(G)$, we put
\begin{equation}\label{eq:def_orb_int_2}
\Orb(g,f) := \int_{\frac{H\times H}{(H\times H)_g}} f(h_1^{-1}gh_2) d(h_1,h_2).
\end{equation}

\subsection{Intersection numbers}
\label{ss:def_intersection}
We next define two families of intersection numbers, one for each of the possible Hasse invariants $λ \in \{0, 1/2\}$. For $λ = 0$, these are the intersection numbers that occur in the linear AFL from \cite{LM1}. For $λ = 1/2$, these are a special case of the intersection numbers from \cite{LM2}.

Let $B = B_λ$ in the following. Let $\breve F$ be the completion of a maximal unramified extension of $F$ with ring of integers $O_{\breve F}$ and residue field $\mbF$; fix an embedding $E\to \breve F$. As a further datum, let $\mbY$ be a strict $π$-divisible $O_E$-module of dimension $1$ and $O_E$-height $n$ over $\mbF$, see \cite[Definition 3.1]{LM1}. We denote its $O_E$-action by $\iota:O_E\to\End(\mbY)$. Recall that the connected-étale sequence of any such $\mbY$ has a unique splitting, meaning there is a canonical isomorphism
$$\mbY = \mbY^\circ \times \mbY^\et$$
where $\mbY^\circ$ is the connected component of the identity of $\mbY$ and $\mbY^\et$ its maximal étale quotient. Let $n^\circ$ and $n^\et$ be the $O_E$-heights of $\mbY^\circ$ and $\mbY^\et$. Then $n = n^\circ + n^\et$, every $1\leq n^\circ \leq n$ may occur, and $(n^\circ, n^\et)$ characterize $\mbY$ uniquely up to isomorphism.

Define $(\mbX, κ)$ as the Serre tensor product $\mbX = O_B \tensor_{O_E}\mbY$ with $O_B$-action $κ(x) = x\tensor \mr{id}_\mbY$. Note that $\mbX$ also carries the $O_B$-linear $O_E$-action $\mr{id}_{O_B}\tensor ι$ which we again denote by $ι$.

\begin{rmk}\label{rmk:Morita}
Assume that $λ = 0$. Then any choice of isomorphism $O_B \iso M_2(O_F)$ provides a decomposition of $\mbX$ as $\mbX = \mbY^{\oplus 2}$ (Morita equivalence). In this way, the ensuing definitions for $λ = 0$ reduce to the ones in \cite{LM1}.
\end{rmk}

We consider the rings of quasi-endomorphisms $C_\mbY = \End^0_E(\mbY, ι)$ and $D_{λ,\mbY} = \End^0_B(\mbX, κ)$, as well as the groups $H_\mbY = C_\mbY^\times$ and $G_{λ,\mbY} = D_{λ,\mbY}^\times$. By functoriality of the Serre tensor construction, there is a group homomorphism
$$H_\mbY\lr G_{λ,\mbY},\quad g\longmapsto \mr{id}_{O_B}\tensor g.$$
The structure of these groups is as follows. Let $\mbX = \mbX^\circ\times \mbX^\et$ be the connected-étale decomposition of $\mbX$. There are no homomorphisms between connected and étale $π$-divisible groups over $\mbF$, so
\begin{equation}\label{eq:decomp_C_D}
C_\mbY = C_\mbY^\circ \times C_\mbY^\et,\ \ \quad D_{λ,\mbY} = D^\circ_{λ,\mbY}\times D_{λ,\mbY}^\et,
\end{equation}
where the factors denote the $E$-linear (resp. $B$-linear) quasi-endomorphisms of the factors of $\mbY$ and $\mbX$. Then
$$\begin{array}{rclrcl}
C_\mbY^\circ & \iso & C_{1/n^\circ} \quad & D_{λ,\mbY}^\circ & \iso & D_{1/2n^\circ + λ}\\[4pt]
C_\mbY^\et & \iso & M_{n^\et}(E) \quad & D_{λ,\mbY}^\et & \iso & M_{n^\et}(B).
\end{array}$$
Here, $C_{1/n^\circ}$ denotes a central division algebra (CDA) of Hasse invariant $1/n^\circ$ over $E$, and $D_{1/2n^\circ + λ}$ denotes a CSA of degree $2n^\circ$ and Hasse invariant $1/2n^\circ + λ$ over $F$. Note that the two pairs $(C_\mbY^\circ, D_{λ,\mbY}^\circ)$ and $(C_\mbY^\et, D_{λ,\mbY}^\et)$ are of the type considered in \S\ref{ss:groups}.
\begin{defn}\label{eq:reg_ss_II}
An element $g = (g^\circ, g^\et)\in G_{λ,\mbY}$ is called regular semi-simple if its components $g^\circ$ and $g^{\mr{et}}$ are regular semi-simple with respect to $H_\mbY^\circ$ and $H_\mbY^\et$, and if its invariant, defined as
$$\Inv(g; T) = \Inv(g^\circ; T)\,\Inv(g^{\mr{et}}; T) \in F[T],$$
is a separable polynomial. Let $G_{λ, \mbY, \mr{rs}}\subset G_{λ, \mbY}$ be the subset of regular semi-simple elements.
\end{defn}

We now associate an intersection number to each $g\in G_{λ,\mbY, \mr{rs}}$. Let $\mcN$ be the RZ space for $\mbY$: By definition, this means that it is the formal scheme over $O_{\breve F}$ that represents the functor
\begin{equation}\label{eq:def_LT_space}
\mcN(S) = \left\{(Y, ι, ρ)\ \left\vert\ \text{\begin{varwidth}{\textwidth}\centering 
$(Y,ι)/S$ a strict $O_E$-module\\
$ρ:\overbar{S}\times_{\Spec \mbF} (\mbY, ι)\lr \overbar{S}\times_S (Y,ι)$ a quasi-isogeny
\end{varwidth}}\right.\right\}.
\end{equation}
Here and in the following, $\overbar{S} = \mbF\tensor_{O_{\breve F}} S$ denotes the special fiber of $S$.

Similarly, let $\mcM_λ$ be the RZ space for $(\mbX, κ)$. Recall for its definition that an $O_B$-action on a $2$-dimensional strict $O_F$-module $X$ over a $\Spf O_{\breve F}$-scheme $S$ is called special if the two $κ(O_E)$-eigenspaces of $\Lie(X)$ are both locally free of rank $1$ as $\mcO_S$-modules.\footnote{This definition goes back to Drinfeld \cite{Drinfeld}.} Then $\mcM_λ$ is the formal scheme over $\Spf O_{\breve F}$ that represents the functor
\begin{equation}\label{eq:def_mysterious_space}
\mcM_λ(S) = \left\{(X, κ, ρ)\ \left\vert\ 
\text{\begin{varwidth}{\textwidth}\centering 
$(X,κ)/S$ a special $O_B$-module\\
$ρ:\overbar{S} \times_{\Spec \mbF} (\mbX, κ) \lr \overbar{S}\times_S (X,κ)$ a quasi-isogeny
\end{varwidth}}\right. \right\}.
\end{equation}
It is clear that if $(Y, ι)$ is a $1$-dimensional strict $O_E$-module, then $O_B\tensor_{O_E}(Y, ι)$ is a special $O_B$-module. Thus we obtain a morphism
\begin{equation}\label{eq:Serre}
\begin{aligned}
\mcN & \longrightarrow \ \mcM\\
(Y,\,ι,\,ρ) & \longmapsto \ (O_B\tensor_{O_E} Y,\, κ(x) := x\tensor \mr{id}_Y,\, \mr{id}_{O_B}\tensor ρ).
\end{aligned}
\end{equation}
This is a closed immersion by \cite[Proposition 4.15]{LM2}. There are right actions of $H_\mbY$ on $\mcN$ and of $G_{λ,\mbY}$ on $\mcM_λ$ by
$$h\cdot (Y, ι, ρ) = (Y, ι, ρh)\quand g\cdot (X, κ, ρ) = (X, κ, ρg).$$
The morphism \eqref{eq:Serre} is equivariant with respect to $H_\mbY \to G_{λ,\mbY}$. Moreover, the structure of $\mcN$ as formal scheme is easy to describe:
\begin{equation}\label{eq:str_N}
\mcN\iso \coprod_{\mbZ\, \times\, GL_{n^{\mr{et}}}(E)/GL_{n^{\mr{et}}}(O_E)} \Spf O_{\breve F}[\![t_1,\ldots,t_{n-1}]\!],
\end{equation}
where the indexing can be chosen compatibly with the $H_\mbY^\et$-action for a fixed identification $H_\mbY^\et \iso GL_{n^\et}(E)$. In particular, $\mcN$ is formally smooth of relative dimension $n-1$ over $\Spf O_{\breve F}$.

Concerning $\mcM_λ$, it is known to be locally formally of finite type over $\Spf O_{\breve F}$ and regular of dimension $2n$, see \cite[Proposition 4.13]{LM2}. Using \eqref{eq:Serre}, we can thus view $\mcN$ as cycle in middle dimension on $\mcM_λ$.

For a regular semi-simple element $g\in G_{λ,\mbY,\rs}$, we define the intersection locus
\begin{equation}\label{eq:def_Ig}
\mcI(g) := \mcN\cap g\cdot \mcN.
\end{equation}
For such $g$, we also denote by $g = g_+ + g_-$ the decomposition of $g$ into $E$-linear and $E$-conjugate linear components, where $ι:E\to D_{λ,\mbY}$ comes from the definition of $(\mbX, κ)$. Set $z_g = g_+^{-1}g_-$ and $L_g = F[z_g^2]$. Since $L_g^\times = H_\mbY \cap g^{-1}H_\mbY g$ as subgroups of $G_{λ,\mbY}$, the $L_g^\times$-action on $\mcM$ preserves the intersection locus $\mcI(g)$. Let $Γ \subset L_g^\times$ be a free discrete subgroup such that $L_g^\times = Γ\times O_{L_g}^\times$. It acts without fixed points on $\mcM_λ$.
\begin{prop}\label{prop:qc}
Let $g\in G_{λ,\mbY}$ and $Γ\subset L_g^\times$ be as before. The quotient $Γ\backslash (\mcN \cap g \cdot \mcN)$ is an artinian scheme.
\end{prop}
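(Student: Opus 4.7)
The plan is to establish two claims, from which the proposition follows formally: (a) every geometric point of $\mcI(g) = \mcN \cap g \cdot \mcN$ has an artinian local ring; and (b) the set of connected components of $\mcI(g)$ is finite modulo the $\Gamma$-action. Granted these, $\mcI(g)$ is a disjoint union of artinian local schemes on which $\Gamma$ acts freely with finitely many orbits, so $\Gamma \backslash \mcI(g)$ is a finite disjoint union of artinian local schemes, i.e., an artinian scheme.

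For (a), I use the decomposition \eqref{eq:str_N} to write $\mcN = \coprod_{\alpha \in I} \mcN_\alpha$ with $I = \mbZ \times GL_{n^\et}(E)/GL_{n^\et}(O_E)$ and each $\mcN_\alpha$ a smooth formal disc over $\Spf O_{\breve F}$ of relative dimension $n-1$. Then
$$\mcI(g) = \coprod_{(\alpha, \beta) \in I \times I} \mcN_\alpha \cap g \cdot \mcN_\beta,$$
with intersections taken inside the regular $2n$-dimensional formal scheme $\mcM_\lambda$. Since $\dim \mcN_\alpha + \dim (g \cdot \mcN_\beta) = 2n = \dim \mcM_\lambda$, every irreducible component of $\mcN_\alpha \cap g \cdot \mcN_\beta$ has dimension at least $0$, and (a) is equivalent to this intersection being proper. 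To prove properness I would perform a tangent-space computation at each geometric point $x = (X, \kappa, \rho)$: such a point carries two Serre tensor factorizations of $(X, \kappa)$, giving two embeddings $O_E \to \End^0_B(X, \kappa)$ which differ by conjugation by $g$. Regular semi-simplicity of $g$ ensures that these two embeddings jointly generate a maximal commutative subalgebra, and translating via Grothendieck--Messing theory supplies $n$ independent linear conditions on tangent vectors, forcing $T_x (\mcN_\alpha \cap g \cdot \mcN_\beta) = 0$.

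For (b), I translate into a lattice-counting problem. The embedding $L_g^\times \hookrightarrow H_\mbY = (C_\mbY^\circ)^\times \times GL_{n^\et}(E)$ acts on $I$ by affine translation in the $\mbZ$-factor and by left translation in the $GL_{n^\et}(E)/GL_{n^\et}(O_E)$-factor, with $O_{L_g}^\times$ acting through compact subgroups of each. The condition $\mcN_\alpha \cap g \cdot \mcN_\beta \neq \emptyset$ restricts $(\alpha, \beta)$ to a bounded subset of $I \times I$ whose diameter depends only on the invariants $\Inv(g; T)$. Cocompactness of $\Gamma$ in $L_g^\times / O_{L_g}^\times$, combined with this boundedness, then yields finiteness of the orbit set.

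The main obstacle is the transversality assertion underlying (a): a dimension count alone gives only an upper bound, whereas what is needed is actual vanishing of the tangent space. I would extract the required $n$ transverse equations from the difference between the two embeddings $O_E \to \End^0_B(X, \kappa)$ at an intersection point, using that their relative invariant is the separable polynomial $\Inv(g; T)$. The finiteness claim in (b) is essentially a Cartan-type bound on lattice positions and should be routine once the moduli-to-lattice translation is in place.
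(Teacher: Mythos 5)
Your two-step skeleton (artinian local rings, plus finiteness of points modulo $\Gamma$) is a reasonable way to organize the claim, and your step (b) is essentially the content of the input the paper actually uses: it simply cites \cite[Proposition 4.17]{LM2}, which says that $\Gamma\backslash\mcI(g)$ is a \emph{proper scheme} over $\Spec O_{\breve F}$ with empty generic fiber, and then observes that $\mcN_{\red}$ is $0$-dimensional by \eqref{eq:str_N}; a proper $O_{\breve F}$-scheme with finite discrete reduction is finite over $O_{\breve F}$, and empty generic fiber then forces it to have finite length, i.e.\ to be artinian. So the paper needs no intersection-theoretic dimension count at all.

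Your step (a), however, contains a genuine error. You propose to show $T_x(\mcN_\alpha\cap g\cdot\mcN_\beta)=0$ at every geometric point. By Nakayama, vanishing of the tangent space at a closed point forces the local ring there to be the residue field, so your claim would make $\mcI(g)$ a disjoint union of reduced points and $\Int(g)$ a pure point count. That contradicts the entire setup: Definition \ref{def:intersection} and Theorem \ref{thm:main} are interesting precisely because the local rings of $\Gamma\backslash\mcI(g)$ are non-reduced artinian rings whose lengths vary and match derivatives of orbital integrals (already visible via Lemma \ref{lem:Ig}, where $\mcI(g)$ is the deformation locus $\mcZ(w)$ of a quasi-endomorphism inside an $(n-1)$-dimensional formal disc). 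You are conflating \emph{proper} intersection (correct dimension, artinian local rings) with \emph{transverse} intersection (vanishing tangent spaces); only the former holds. Moreover, even a correct tangent-space or dimension bound at individual closed points cannot establish artinian-ness here: the local ring of $\mcI(g)$ at $x$ is a quotient of $O_{\breve F}[\![t_1,\dots,t_{n-1}]\!]$, and what must be shown is that the defining ideal is open, i.e.\ one must control a full formal neighborhood, not just the first-order one. That control is exactly what the properness-with-empty-generic-fiber statement of \cite[Proposition 4.17]{LM2} provides, and your plan does not supply a substitute for it.
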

\begin{proof}
By \cite[Proposition 4.18]{LM2},\footnote{Strictly speaking, \cite[Proposition 4.18]{LM2} is formulated only for $p$-adic $F$. However, Proposition \ref{prop:qc} is known when $λ = 0$ by \cite[Lemma 3.7]{LM1} and the comparison between the cases $λ = 0$ and $λ = 1/2$ in \S\ref{s:AFL} gives an alternative proof of Proposition \ref{prop:qc} that applies to all $F$.} the quotient $Γ\backslash \mcI(g)$ is a proper scheme over $\Spec O_{\breve F}$ with empty generic fiber. Since the maximal reduced subscheme $\mcN_{\mr{red}}$ is $0$-dimensional, cf. \eqref{eq:str_N}, this scheme has to be artinian.
\end{proof}

The general definition of intersection numbers in \cite[Definition 4.21]{LM2} now specializes to taking the length:

\begin{defn}\label{def:intersection}
For a regular semi-simple element $g\in G_{λ,\mbY, \rs}$, choose $Γ$ as before and define
$$\mr{Int}(g) := \mr{len}_{O_{\breve F}} \left(\mcO_{Γ\backslash \mcI(g)} \right).$$
\end{defn}

\subsection{FL and AFL}
\label{ss:results}
We consider the following test functions. For the case $λ = 0$, i.e. for $B\iso M_2(F)$, we define
$$f'_0 = 1_{GL_{2n}(O_F)} \in C^\infty_c(G')\quand f_0 = 1_{GL_{2n}(O_F)} \in C^\infty_c(G).$$
For the case $λ = 1/2$, we first define the parahoric subgroup
\begin{equation}\label{eq:def_Par}
\Par = \left\{\begin{pmatrix}
GL_n(O_F) & π\,M_n(O_F) \\ M_n(O_F) & GL_n(O_F)\end{pmatrix}\right\} \subset GL_{2n}(O_F).
\end{equation}
Let $1_\Par \in C^\infty_c(G')$ be its characteristic function and let $h\in H'$ be the element $\diag(π 1_n, 1_n)$. Then put
\begin{equation}\label{eq:test_functions_1_2}
f'_{1/2}(\bdot) = 1_\Par(h\bdot) \in C^\infty_c(G') \quand f_{1/2} = 1_{GL_n(O_B)} \in C^\infty_c(G).
\end{equation}
The orbital integrals of $f'_{1/2}$ and $1_\Par$ are related by
\begin{equation}\label{eq:orb_int_normalization}
\Orb(γ, f'_{1/2}, s) = q^{-ns}\Orb(γ, 1_\Par, s).
\end{equation}
The advantage of $f'_{1/2}$ over $1_\Par$ is that its orbital integral satisfies the completely symmetric functional equation (see \cite[Proposition 3.19]{LM2})
\begin{equation}\label{eq:fctl_equation}
\Orb(γ, f'_{1/2}, -s) = ε_{1/2}(γ)\Orb(γ, f'_{1/2}, s)
\end{equation}
where the sign is defined by
\begin{equation}\label{eq:sign}
ε_{1/2}(γ) = (-1)^n\, (-1)^r,\quad r = v\left(\mr{det}_{M_n(B)/F}(z_γ)\right).
\end{equation}

\begin{defn}\label{def:matching}
Two regular semi-simple elements $γ\in G'_\rs$ and $g\in G_{λ,\rs}$ (resp. $γ$ and $g\in G_{λ,\mbY,\rs}$) are said to match if
$$\Inv(γ;T) = \Inv(g;T).$$
\end{defn}

\begin{thm}[Fundamental Lemma]\label{thm:FL}
For every regular semi-simple $γ\in G'_\rs$,
$$\Orb(γ, f'_λ) = \begin{cases} \Orb(g, f_λ) & \text{if there exists a matching $g\in G_{λ, \rs}$}\\
0 & \text{otherwise.}
\end{cases}$$
\end{thm}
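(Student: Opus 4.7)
The plan is to split the theorem into its two cases $\lambda = 0$ and $\lambda = 1/2$, with the main content lying in the latter. For $\lambda = 0$, one has $B_0 \cong M_2(F)$ and $O_{B_0} \cong M_2(O_F)$. Morita equivalence identifies $(GL_n(B_0), GL_n(O_{B_0}))$ with $(GL_{2n}(F), GL_{2n}(O_F))$, turning $f_0$ into $f'_0$, while the embedding $O_E \hookrightarrow O_{B_0}$ induces the standard unramified quadratic embedding $GL_n(E) \hookrightarrow GL_{2n}(F)$. The desired identity is then precisely the Guo--Jacquet fundamental lemma for hyperspecial test functions, established in \cite{Guo}.

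For $\lambda = 1/2$, where $B$ is the quaternion division algebra and the test function $f'_{1/2} = 1_\Par(h\,\cdot\,)$ lives at non-hyperspecial level, the plan is a combinatorial reduction via lattice counts. First, I would rewrite both sides in lattice-theoretic form: $\Orb(g, 1_{GL_n(O_B)})$ as a count over $GL_n(O_E) \times GL_n(O_E)$-cosets of pairs of $O_B$-lattices in $B^n$ related by $g$, and $\Orb(\gamma, f'_{1/2})$ (equivalently, up to the factor in \eqref{eq:orb_int_normalization}, $\Orb(\gamma, 1_\Par)$) as a signed count, with signs coming from the character $\eta$, of parahoric lattice chains $L \supset L' \supset \pi L$ in $F^{2n}$ with $L/L'$ free of rank $n$ and related by $\gamma$. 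Second, since $E$ splits $B$, the isomorphism $O_B \otimes_{O_F} O_E \cong M_2(O_E)$ combined with Morita equivalence converts $O_B$-lattices in $B^n$ bijectively into $O_E$-lattices in $E^{2n}$ carrying a compatible $\mr{Gal}(E/F)$-action. This translates the RHS into an orbital integral of Guo--Jacquet type over $E$ for the pair $(GL_{2n}(E), GL_n(E) \times GL_n(E))$; applying the Guo--Jacquet FL over $E$ and the quadratic base change FL then descends this back to an orbital integral on $(G', H')$ over $F$.

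The final task is to identify this $F$-side orbital integral with $\Orb(\gamma, f'_{1/2})$. The element $h = \diag(\pi 1_n, 1_n)$ and the parahoric $\Par$ should emerge naturally here, because the $O_E$-lattice corresponding to the standard $O_B$-lattice $O_B^n$ sits at a non-standard level in $E^{2n}$ that is correctable precisely by conjugation by $h$. The main obstacle is verifying this combinatorial match in detail: that the Galois-equivariance constraint on the $E$-side translates precisely into the $\eta$-twist on the $F$-side, that the level shift produces exactly $f'_{1/2} = 1_\Par(h\,\cdot\,)$, and that the Haar measure normalizations \eqref{eq:Haar_1}--\eqref{eq:Haar_2} together with the transfer factor $\Omega(\gamma, 0)$ are compatible through the entire descent. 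The vanishing in the non-matching case should follow automatically, since the lattice count on the $B$-side is empty exactly when the invariant of $\gamma$ is not realized by any $g \in G_{\lambda, \rs}$.
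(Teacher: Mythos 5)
Your $\lambda = 0$ case and the general idea of converting both sides to lattice counts agree with the paper, but the core of your $\lambda=1/2$ argument has a gap. You propose to split $B$ over $E$, view the right-hand side as a Guo--Jacquet orbital integral over $E$, and descend via ``the Guo--Jacquet FL over $E$ and the quadratic base change FL.'' The base change FL of Kottwitz is a statement about \emph{twisted conjugation} orbital integrals $\int 1_{GL_n(O_E)}(y^{-1}x\sigma(y))\,dy$, whereas both sides of the theorem are \emph{symmetric-space} orbital integrals over $H\times H$ modulo a torus, with an $\eta$-twist and a parameter $s$ on the left. These collapse to (twisted) conjugation orbital integrals only in the special regime where $z_\gamma^2/\pi$ is integrally invertible (Lemmas \ref{aux1lattice} and \ref{aux2lattice}); in general there is no such collapse, and ``Guo--Jacquet over $E$'' gives nothing since $B\otimes_F E$ is split. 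What your outline is missing is the essential structural step: apply Hensel's lemma to $O_F[z_\gamma^2/\pi]$ to factor $\gamma$ (and the matching $g$) into a component where $z^2/\pi$ is topologically nilpotent and one where it is a unit, with a corresponding factorization of both orbital integrals (Corollary \ref{reduction}, Lemma \ref{reduction_b}). The topologically nilpotent factor is handled by the rescaling $x\mapsto x/\pi$, which identifies $\Orb(\gamma,f'_{1/2},s)$ with a \emph{hyperspecial} orbital integral $\Orb(\wt\gamma,f'_0,s)$ over $F$ (and similarly on the $B$-side via $g(x)\mapsto 1+x\varpi_0$), reducing to the Guo--Jacquet FL over $F$, not over $E$; only the unit factor reduces to the base change FL. Without this factorization neither mechanism applies to a general $\gamma$.

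A second gap: the vanishing statement does not ``follow automatically'' from emptiness of a $B$-side lattice count---when no matching $g$ exists there is no $B$-side object to compare with, so your chain of equalities cannot even be set up. The paper proves vanishing by bounding $\ord_{s=0}\Orb(\gamma,f'_{1/2},s)$ from below by the algebraic vanishing order $\ord_{1/2}(\Inv(\gamma))$ (Corollary \ref{cor:vanishing}), which is $\geq 1$ precisely when no matching $g\in G_{1/2,\rs}$ exists; that bound again relies on the factorization together with the vanishing results of \cite{LM1} for the split case.
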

The case $λ = 0$ is well-known (Guo--Jacquet FL) and due to Guo \cite{Guo}. Our addition is the case $λ = 1/2$ whose proof will be given in \S\ref{s:FL}. We now turn to the central derivatives:

\begin{conj}[Arithmetic Fundamental Lemma]\label{conj:AFL}
For every regular semi-simple element $γ\in G'_\rs$,
\begin{equation}\label{eq:AFL}
\del(γ, f'_λ) = \begin{cases} 2\,\mr{Int}(g)\log(q) & \text{\begin{varwidth}{\textwidth}if there is a strict $O_E$-module $(\mbY, ι)$\\
and $g\in G_{λ,\mbY, \rs}$ that matches $γ$\end{varwidth}}\\[8pt]
0 & \text{otherwise.}
\end{cases}
\end{equation}
\end{conj}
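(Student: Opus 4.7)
The conjecture splits into two cases by Hasse invariant. The case $\lambda = 0$ is the linear AFL of Q. Li and is treated as assumed input (known for $n \leq 2$). My proposal focuses on the new case $\lambda = 1/2$, which I plan to reduce to the $\lambda = 0$ case via the Serre tensor construction $\mbX = O_B\tensor_{O_E}\mbY$.

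On the orbital integral side, the plan is to give a lattice-counting interpretation of both $\Orb(\gamma, f'_{1/2}, s)$ and $\Orb(g, f_{1/2})$. Unfolding the integral in \eqref{eq:def_orb_int_1} against $f'_{1/2}(\bdot) = 1_\Par(h\bdot)$ shows that $\Orb(\gamma, 1_\Par, s)$ is a weighted sum indexed by pairs of $O_F$-lattices $(L_1, L_2)$ in $F^{2n}$ with $\pi L_1 \subset L_2 \subset L_1$ and $\dim_{\mbF} L_1/L_2 = n$, the weight being a product of $\eta$- and $|\cdot|^s$-characters. Similarly, $\Orb(g, 1_{GL_n(O_B)})$ counts $O_B$-stable lattices in $B^n$, which via the embedding $O_E \to O_B$ can be identified with pairs of $O_E$-lattices in $E^{2n}$. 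After matching these two combinatorial descriptions orbit-by-orbit and using the functional equation \eqref{eq:fctl_equation} to eliminate the central value contribution, I expect to identify $\del(\gamma, f'_{1/2})$ with $\del(\gamma, f'_0)$ for an appropriately corresponding element on the $\lambda = 0$ side. The non-matching case, where both sides must vanish, should follow from the vanishing of individual lattice contributions via the sign $\eta$ when no matching $g$ exists.

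On the intersection-theoretic side, the plan is to exploit the closed immersion $\mcN \hookrightarrow \mcM_{1/2}$ from \eqref{eq:Serre}, together with the equivariance of this map under $H_\mbY \to G_{1/2,\mbY}$. The cycle $\mcN$ sits in middle codimension (dimension $n-1$ inside the regular $2n$-dimensional $\mcM_{1/2}$), and by Proposition \ref{prop:qc} the quotient $\Gamma\backslash \mcI(g)$ is artinian. The key step is a deformation-theoretic comparison analyzing the formal completion of $\mcM_{1/2}$ along $\mcN$: using the $\kappa(O_E)$-eigenspace decomposition of $\Lie(X)$ for a special $O_B$-module, I expect to identify this completion with a controlled thickening of the linear AFL moduli problem for $\mbY$. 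The factor of $2$ in \eqref{eq:AFL} should then emerge from the $O_F$-rank doubling inherent in the Serre tensor. Combining with the assumed linear AFL for $\mbY$ yields the conclusion.

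The main obstacle I anticipate is the moduli-theoretic translation on the intersection side. Unlike the $\lambda = 0$ case where Morita equivalence (Remark \ref{rmk:Morita}) gives an explicit product decomposition $\mcM_0 \iso \mcN \times \mcN$, the quaternion division algebra $B_{1/2}$ imposes rigidity constraints preventing such a decomposition, so $\mcM_{1/2}$ must be studied directly as a space of special $O_B$-modules. Concretely, one must understand how deformations of the Serre tensor $O_B \tensor_{O_E} Y$ decompose relative to deformations of $Y$, and how an element $g$ that lies in $G_{1/2,\mbY}$ but not in $H_\mbY$ twists the embedding $\mcN \hookrightarrow \mcM_{1/2}$; controlling the transverse directions at each point of $\mcI(g)$ to produce the correct length calculation is the most technically demanding part of the argument.
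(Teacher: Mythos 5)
Your high-level strategy---reduce $\lambda=1/2$ to $\lambda=0$ via lattice counts on the analytic side and the Serre tensor embedding on the geometric side---is indeed the paper's, but three of your key steps would not go through as described. First, the actual mechanism of the reduction is missing: the element on the $\lambda=0$ side is not ``$\gamma$ in a different presentation'' but an element $\wt\gamma$ with the \emph{rescaled} invariant $\pi^{-n}\Inv(\gamma;\pi T)$ (concretely $\gamma(x)\mapsto\gamma(x/\pi)$ on the analytic side, and $g=\left(\begin{smallmatrix}x&\pi y\\ y&x\end{smallmatrix}\right)\mapsto\wt g=\left(\begin{smallmatrix}x&y\\ y&x\end{smallmatrix}\right)$ on the geometric side); the comparison of transfer factors then produces exactly the normalization $q^{-ns}$ of \eqref{eq:orb_int_normalization}. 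This direct comparison only works when $z_\gamma^2/\pi$ is topologically nilpotent. The remaining integral-but-not-nilpotent part must be split off by Hensel's lemma and handled by \emph{different} inputs: the quadratic base change fundamental lemma of Kottwitz on the analytic side, and a factorization $\Int(g)=\Int(g_0)\,\Orb(g_1,1_{GL_{n_1}(O_B)})$ coming from the \'etale part of $\mbY$ on the geometric side. Your proposal has no counterpart for this edge case.

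Second, on the intersection side your anticipated ``factor of $2$ emerging from $O_F$-rank doubling'' is a misconception that would produce a wrong constant: the paper shows that $\mcI(g)$ and $\mcI(\wt g)$ are literally the \emph{same} closed formal subscheme $\mcZ(w)$ of $\mcN$ (Lemma \ref{lem:Ig}), so $\Int(g)=\Int(\wt g)$ on the nose; the $2$ in \eqref{eq:AFL} is already present in the $\lambda=0$ linear AFL and is simply inherited. No deformation-theoretic analysis of the formal completion of $\mcM_{1/2}$ along $\mcN$ is needed beyond the known identity $\mcI(g)=\mcN\cap\mcZ(z_g)$ for topologically nilpotent $z_g$. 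Third, your vanishing argument (``vanishing of individual lattice contributions via the sign $\eta$'') cannot yield vanishing of the \emph{derivative} at $s=0$: the paper instead distinguishes the case $\ord_{1/2}(\Inv(\gamma))=0$, where the functional equation \eqref{eq:fctl_equation} has sign $+1$ so that $\Orb(\gamma,f'_{1/2},s)$ is even in $s$, from the case $\ord_{1/2}(\Inv(\gamma))\geq 2$, where a vanishing-order bound inherited from \cite{LM1} applies; matching elements $g\in G_{1/2,\mbY,\rs}$ exist precisely when $\ord_{1/2}(\Inv(\gamma))=1$, and establishing that equivalence is itself a nontrivial step you have not addressed.
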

The AT conjecture for general CSAs \cite[Conjecture 1.5]{LM2} also includes an unspecified correction term that cannot be omitted in general. Conjecture \ref{conj:AFL} here, which is for Hasse invariant $1/2$, is hence stronger in the sense that this correction term is conjectured to vanish.

For $λ = 0$, Conjecture \ref{conj:AFL} is the linear AFL conjecture from \cite{LM1}. The vanishing part of \eqref{eq:AFL} is known to hold in general, see \cite[Corollary 2.18]{LM1}. Moreover, Identity \eqref{eq:AFL} is known for all $g\in G_{0, \mbY, \mr{rs}}$ with $\mbY$ such that $n^\circ \leq 2$. This statement is \cite[Corollary 1.3]{LM1} and goes back to Li \cite{Li_GL4} who verified the case $n \leq 2$.

Our main result here is a reduction of the case $λ = 1/2$ in Conjecture \ref{conj:AFL} to the case $λ = 0$:

\begin{thm}\label{thm:main}
\begin{enumerate}[wide, labelindent=0pt, labelwidth=!, label=(\arabic*), topsep=2pt, itemsep=2pt]
\item The vanishing part of Conjecture \ref{conj:AFL} holds.
\item If Conjecture \ref{conj:AFL} holds for $λ = 0$, then it also holds for $λ = 1/2$. More precisely, assume that Identity \eqref{eq:AFL} holds for $λ = 0$ and all $g\in G_{0, \mbY, \mr{rs}}$ for some $\mbY$. Then Identity \eqref{eq:AFL} also holds for $λ = 1/2$ and all $g\in G_{1/2, \mbY, \mr{rs}}$.
\end{enumerate}
\end{thm}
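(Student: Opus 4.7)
The plan is to deduce both parts by a two-pronged reduction from the $λ=1/2$ side of \eqref{eq:AFL} to the $λ=0$ side: a combinatorial identity on the analytic side and a moduli-theoretic identity on the geometric side, as sketched in the introduction. Combined with Conjecture \ref{conj:AFL} for $λ=0$ (the hypothesis in Part (2), known by \cite{LM1} for low-dimensional $\mbY$), the Guo--Jacquet FL, and the quadratic base change FL, these two reductions will give \eqref{eq:AFL} for $λ=1/2$.

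For the analytic side, the key observation is that $\Par$ is the stabilizer of the standard partial lattice flag $L_0 \supset L_1 \supset π L_0$ in $F^{2n}$ with $L_0/L_1 \cong L_1/π L_0 \cong \mbF_q^n$. Via \eqref{eq:orb_int_normalization}, $\Orb(γ, f'_{1/2}, s)$ is a rescaling of $\Orb(γ, 1_\Par, s)$, which admits a lattice-theoretic interpretation as a weighted sum over such flags. I would sum first over the intermediate lattice $L_1$, i.e.\ over the Grassmannian $\mathrm{Gr}(n, L_0/π L_0)$, in order to express $\Orb(γ, 1_\Par, s)$ as a combination of the hyperspecial integral $\Orb(γ, f'_0, s)$ and auxiliary sums which, under matching, reduce via the quadratic base change FL to orbital integrals on $GL_n(E)$. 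Differentiating at $s=0$ then pins $\del(γ, f'_{1/2})$ down in terms of $\del(γ, f'_0)$ and known base-change quantities.

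For the geometric side, I would exploit that $O_B$ is free of rank $2$ over $O_E$ and use this to convert the intersection problem on $\mcM_{1/2}$, which concerns special $O_B$-modules, into an intersection problem over a $λ=0$ RZ space equipped with additional $O_E$-equivariant data. In this translation, the cycle $\mcN \hookrightarrow \mcM_{1/2}$ should be identified with a diagonal-type locus and $g \cdot \mcN$ with its translate by a matched element $g' \in G_{0, \mbY, \rs}$. Comparing the Artinian lengths of Definition \ref{def:intersection} on the two sides should then yield $\mr{Int}_{1/2}(g) = \mr{Int}_0(g')$, up to an explicit factor governed by the Hasse-invariant shift $1/2n^\circ \mapsto 1/2n^\circ + 1/2$.

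Combining the two reductions proves Part (2) directly, via Conjecture \ref{conj:AFL} for $λ=0$ and $\mbY$. For Part (1), the functional equation \eqref{eq:fctl_equation} already forces $\del(γ, f'_{1/2}) = 0$ whenever $ε_{1/2}(γ) = +1$; in the remaining case $ε_{1/2}(γ) = -1$, I would argue via a local Hasse--Brauer analysis that the sign $(-1)^{n+r}$ is precisely the obstruction to embedding $L_γ$ into an appropriate $D_{1/2n^\circ+1/2}$, so that for such $γ$ a matching $g \in G_{1/2, \mbY, \rs}$ always exists and the ``no matching'' case of the vanishing claim is vacuous. The principal obstacle throughout is the precise bookkeeping of correction terms: the parahoric--hyperspecial discrepancy on the analytic side and the Hasse-invariant discrepancy on the geometric side must cancel exactly under the base change FL in order to pin down the constant $2$ and the factor $\log(q)$ in \eqref{eq:AFL}.
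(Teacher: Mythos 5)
There is a genuine gap in your Part (1). You claim that when $ε_{1/2}(γ) = -1$ a matching $g\in G_{1/2,\mbY,\rs}$ always exists, so that the ``no matching'' case is covered by the functional equation alone. This is false: a matching $g$ exists if and only if $\ord_{1/2}(\Inv(γ)) = 1$ (an embedding $B_δ \to D_{1/2,\mbY}$ forces $B_δ$ to have exactly one factor not of the form $B\tensor_F L_i$), whereas $ε_{1/2}(γ) = (-1)^{\ord_{1/2}(\Inv(γ))}$ only detects the parity of this count. For instance, with $n=3$ and $\Inv(γ;T) = \prod_{i}(T-π^2u_i)$ for distinct units $u_i$, one has $ε_{1/2}(γ)=-1$, no matching $g$ for any $\mbY$, and a nonzero orbital integral function. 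The vanishing of $\del(γ,f'_{1/2})$ in such cases requires the quantitative bound $\ord_{s=0}\Orb(γ,f'_{1/2},s)\geq \ord_{1/2}(\Inv(γ))$ (Corollary \ref{cor:vanishing}), which itself rests on the factorization of the orbital integral and the analogous bound for $λ=0$ from \cite{LM1}; your proposal has no substitute for this.

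For Part (2), your two reductions are aimed at the wrong target. The paper does not express $\Orb(γ,1_\Par,s)$ as a Grassmannian-indexed combination involving $\Orb(γ,f'_0,s)$ for the \emph{same} $γ$, nor does it compare $\Int_{1/2}(g)$ with $\Int_0(g')$ for a $g'$ matching the same orbit. The essential mechanism is a rescaling of the invariant, $δ(T)\mapsto π^{-n}δ(πT)$: when $z_γ^2/π$ is topologically nilpotent one replaces $γ(x)$ by $\wt γ = γ(x/π)$ and shows $\Orb(γ,f'_{1/2},s)=\Orb(\wt γ,f'_0,s)$ exactly (the lattice sets $\mcL_{1/2}(γ(x))$ and $\mcL_0(\wt γ)$ coincide and the transfer factors differ by $q^{-ns}$, cancelling the normalization of $f'_{1/2}$); on the geometric side one replaces $g=\left(\begin{smallmatrix} x & πy\\ y & x\end{smallmatrix}\right)$ by $\wt g=\left(\begin{smallmatrix} x & y\\ y & x\end{smallmatrix}\right)\in G_{0,\mbY}$ and shows $\mcI(g)=\mcZ(y)=\mcI(\wt g)$ as formal subschemes of $\mcN$, so $\Int(g)=\Int(\wt g)$ with no correction factor. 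Without this rescaling the $λ=0$ AFL is being invoked for the wrong orbit. You also omit the necessary case division: the above only applies when $z_γ^2/π$ is topologically nilpotent, and the integrally invertible factor must be split off (Corollary \ref{reduction}, Lemma \ref{reduction_b}, Proposition \ref{prop:factorization_geometric}) and handled via the FL of Theorem \ref{thm:FL} together with the observation that its parahoric orbital integral is constant in $s$, so it contributes no derivative.
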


\begin{cor}[to Theorem \ref{thm:main} and \protect{\cite[Corollary 1.3]{LM1}}]\label{cor:AFL_low_dim}
Identity \eqref{eq:AFL} holds in all cases with $λ = 1/2$ and with $\mbY$ such that $n^\circ \leq 2$.\qed
\end{cor}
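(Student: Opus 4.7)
The plan is to deduce this corollary directly from Theorem \ref{thm:main} together with \cite[Corollary 1.3]{LM1}; no new computation is required. I would fix a regular semi-simple $\gamma \in G'_{\rs}$ and argue by cases according to whether $\gamma$ admits a match on the intersection side for some strict $O_E$-module $\mbY$.

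In the first case, suppose $\gamma$ does not match any $g \in G_{1/2, \mbY, \rs}$ for any choice of $\mbY$. Then the vanishing part of Conjecture \ref{conj:AFL}, which is Theorem \ref{thm:main}(1), yields $\del(\gamma, f'_{1/2}) = 0$. This matches the right-hand side of \eqref{eq:AFL} for $\lambda = 1/2$ in the absence of a geometric partner.

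In the second case, suppose $\gamma$ matches some $g \in G_{1/2, \mbY, \rs}$ where the connected factor of $\mbY$ has $O_E$-height $n^\circ \leq 2$. I would apply Theorem \ref{thm:main}(2), whose hypothesis is precisely that Identity \eqref{eq:AFL} holds for $\lambda = 0$ and this same $\mbY$. That hypothesis is supplied by \cite[Corollary 1.3]{LM1}, which establishes the linear AFL for every $\mbY$ with $n^\circ \leq 2$. Conclusion of \eqref{eq:AFL} for $\lambda = 1/2$ and this $g$ then follows.

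Since the reduction provided by Theorem \ref{thm:main}(2) is stated $\mbY$-by-$\mbY$, no gluing or compatibility issue arises between the two cases, and there is no genuine obstacle in this deduction. The substantive content sits entirely inside Theorem \ref{thm:main}, whose proof occupies the body of the paper; here one is merely inserting the known inputs.
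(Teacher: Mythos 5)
Your proposal is correct and is exactly the intended deduction: the paper states this corollary with no proof precisely because it follows immediately by feeding \cite[Corollary 1.3]{LM1} into the hypothesis of Theorem \ref{thm:main}(2) for each $\mbY$ with $n^\circ \leq 2$, with the vanishing case covered by Theorem \ref{thm:main}(1).
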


Our proofs of Theorems \ref{thm:FL} and \ref{thm:main} are by expressing the occurring orbital integrals and intersection numbers for $λ = 1/2$ in terms of orbital integrals and intersection numbers for $λ = 0$. This is made precise by the following result.

\begin{thm}\label{thm:auxiliary}
\begin{enumerate}[wide, labelindent=0pt, labelwidth=!, label=(\arabic*), topsep=2pt, itemsep=2pt]
\item Assume that $γ\in G'_{\mr{rs}}$ is regular semi-simple and that $π^{-n}\Inv(γ;πT)$ lies in $T^n + πO_F[T]$. Then there exists a regular semi-simple element $\wt{γ}\in G'_{\mr{rs}}$ with $\Inv(\wt{γ}; T) = π^{-n}\Inv(γ; πT)$ and
$$\Orb(γ,f'_{1/2}, s) = \Orb(\wt{γ}, f'_0, s).$$
\item Assume that $g\in G_{1/2, \mr{rs}}$ and that $π^{-n}\Inv(g;πT) \in T^n + πO_F[T]$. Then there exists a regular semi-simple $\wt g\in G_{0, \mr{rs}}$ with $\Inv(\wt g; T) = π^{-n}\Inv(g; πT)$ and
$$\Orb(g, f_{1/2}) = \Orb(\wt g, f_0)).$$
\item Assume that $g\in G_{1/2, \mbY, \rs}$ is regular semi-simple with $π^{-n}\Inv(g;πT) \in T^n + πO_F[T]$. Then there exists a regular semi-simple $\wt g\in G_{0, \mbY, \mr{rs}}$ with $\Inv(\wt g; T) = π^{-n}\Inv(g; πT)$ and
$$\mr{Int}(g) = \mr{Int}(\wt g).$$
\end{enumerate}
\end{thm}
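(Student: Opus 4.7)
The three parts share a common theme: the rescaling $\Inv(\gamma;T) \mapsto \pi^{-n}\Inv(\gamma;\pi T)$ divides the eigenvalues of $z_\gamma^2$ by $\pi$, and is realized by an asymmetric modification of one of the two $E$-isotypic components of $\gamma$ (resp.\ $g$). The integrality hypothesis $\pi^{-n}\Inv(\gamma;\pi T) \in T^n + \pi O_F[T]$ ensures the modified element lies in the integral structure adapted to the target test function.

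\textbf{Parts (1) and (2).} For part (1), write $\gamma = \bigl(\begin{smallmatrix} A & B \\ C & D\end{smallmatrix}\bigr)$ and set $\tilde\gamma = \bigl(\begin{smallmatrix} A & \pi^{-1}B \\ C & D\end{smallmatrix}\bigr)$. Then $z_{\tilde\gamma}^2 = \pi^{-1}z_\gamma^2$ immediately, giving $\Inv(\tilde\gamma; T) = \pi^{-n}\Inv(\gamma;\pi T)$. For $h_i = (a_i,b_i) \in H'$, the condition $h_1^{-1}\tilde\gamma h_2 \in GL_{2n}(O_F)$ forces the off-diagonal block $a_1^{-1}Bb_2$ into $\pi M_n(O_F)$; the unit-determinant condition then forces the diagonal blocks into $GL_n(O_F)$, which is exactly the condition $h_1^{-1}\gamma h_2 \in \Par$. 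The transfer factor ratio $\Omega(\tilde\gamma,s)/\Omega(\gamma,s) = q^{-ns}$ matches the normalization $\Orb(\gamma, f'_{1/2}, s) = q^{-ns}\Orb(\gamma, 1_\Par, s)$, giving the claimed equality. For part (2), decompose $B_{1/2} = E \oplus E\Pi$ with $\Pi^2 = \pi u_1 \in \pi O_F^\times$ and $B_0 = E \oplus Es$ with $s^2 = u_0 \in O_F^\times$ (in both cases $s\alpha = \sigma(\alpha)s$). Write $g = g_+ + g_-^0\Pi$; choose $\alpha \in O_E^\times$ with $N_{E/F}(\alpha) = u_1/u_0$ (such $\alpha$ exists since $N_{E/F}(O_E^\times) = O_F^\times$ for unramified $E/F$), and set $\tilde g := g_+ + g_-^0\alpha s$. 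A direct calculation using $s Y = \sigma(Y) s$ for $Y \in M_n(E)$ gives $z_{\tilde g}^2 = (N(\alpha)u_0/(\pi u_1)) z_g^2 = \pi^{-1}z_g^2$, so $\Inv(\tilde g;T) = \pi^{-n}\Inv(g;\pi T)$. Since $\alpha \in O_E^\times$, the support conditions $h_1^{-1}\tilde g h_2 \in M_n(O_{B_0})$ and $h_1^{-1}g h_2 \in M_n(O_B)$ both reduce to the same pair of integrality conditions $h_1^{-1}g_+ h_2, h_1^{-1}g_-^0\sigma(h_2) \in M_n(O_E)$; the orbital integrals match.

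\textbf{Part (3) and main obstacle.} One constructs $\tilde g \in G_{0,\mbY,\mr{rs}}$ from $g \in G_{1/2,\mbY,\mr{rs}}$ in parallel to part (2), applied separately to the connected and \'etale factors via \eqref{eq:decomp_C_D}. The strategy is then to identify the intersection loci $\mcI(g) \subset \mcM_{1/2}$ and $\mcI(\tilde g) \subset \mcM_0$ via a $\Pi$-twist on the underlying Dieudonn\'e data, intertwining the actions of $G_{1/2,\mbY}$ and $G_{0,\mbY}$ and preserving the common cycle $\mcN$. On $\mbF$-points, this identification is a bijection of special lattice models that twists one of the two $\iota(O_E)$-eigenspaces by $\Pi$, which is the geometric incarnation of the rescaling $z_g^2 \mapsto \pi^{-1}z_g^2$. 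The main obstacle is upgrading this bijection to an isomorphism of formal deformation neighborhoods; by Proposition \ref{prop:qc}, $\mcI(g)$ is supported on the $0$-dimensional $\mcN_{\mr{red}}$, so the question reduces to a local computation in completed local rings, where a display-theoretic or Grothendieck--Messing argument should identify the universal deformation problems and yield the length equality.
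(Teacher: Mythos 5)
Your parts (1) and (2) are correct in outline and take a more direct route than the paper: the paper first reduces to the normal forms $\gamma(x)$, $g(x)$ and then invokes the lattice-counting formulas \eqref{eq:orb_formula}, \eqref{eq:orb_formulasph} and Lemma \ref{aux2quat} from \cite{LM2}, whereas you compare supports, stabilizers and transfer factors on the group directly; your $\wt\gamma$, $\wt g$ agree with the paper's up to the harmless unit $\alpha$, and the transfer-factor bookkeeping $\Omega(\wt\gamma,s)=q^{-ns}\Omega(\gamma,s)$ is right. However, in both parts you gloss exactly the step where the hypothesis $\pi^{-n}\Inv(\cdot;\pi T)\in T^n+\pi O_F[T]$ must enter. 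In (1), the claim that ``the unit-determinant condition forces the diagonal blocks into $GL_n(O_F)$'' is false for a general element of $GL_{2n}(O_F)$ (antidiagonal block matrices are a counterexample); it holds here only because $z^2_{\wt\gamma}$ is topologically nilpotent, via $\det\left(\begin{smallmatrix}P&Q\\R&S\end{smallmatrix}\right)=\det P\,\det S\,\det\bigl(1-S^{-1}RP^{-1}Q\bigr)$ and the observation that $S^{-1}RP^{-1}Q$ is a block of $z^2$ of the translated element, hence $\det(1-S^{-1}RP^{-1}Q)\in O_F^\times$. In (2) you silently replace the supports $GL_n(O_{B_\lambda})$ of $f_\lambda$ by $M_n(O_{B_\lambda})$; matching the true supports again requires topological nilpotence (for $\lambda=1/2$ via $GL_n(O_B)=GL_n(O_E)+M_n(O_E)\varpi$, for $\lambda=0$ via the unit reduced-norm condition), exactly as in the paper's Lemma \ref{aux2quat}. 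As written, the hypothesis never enters your argument for (2), so the argument would ``prove'' the identity without it, which cannot be.

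Part (3) has a genuine gap, which you yourself flag: the upgrade of the $\mbF$-point bijection to an identification of formal neighborhoods — i.e. the equality of lengths, which is the entire content of the statement — is left as something a display or Grothendieck--Messing computation ``should'' yield. No such computation is needed, and this is where the paper's argument differs essentially from your sketch: writing $g=\left(\begin{smallmatrix}x&\pi y\\y&x\end{smallmatrix}\right)$ as in \eqref{eq:endo_ring_D_explicit} and $\wt g=\left(\begin{smallmatrix}x&y\\y&x\end{smallmatrix}\right)$, Lemma \ref{lem:Ig} (that is, \cite[Proposition 4.20]{LM2}, applicable because $z_g$ and $z_{\wt g}$ are topologically nilpotent under the hypothesis) gives
$\mcI(g)=\mcN\cap\mcZ(z_g)=\mcZ(y)=\mcN\cap\mcZ(z_{\wt g})=\mcI(\wt g)$
as closed formal subschemes of $\mcN$, so the two intersection loci are literally the same formal scheme, equivariantly for $L_g^\times=L_{\wt g}^\times$. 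Note also that your sketch never mentions the discrete group $\Gamma$, although $\Int$ is defined through the quotient $\Gamma\backslash\mcI(g)$ (Proposition \ref{prop:qc} concerns this quotient, not $\mcI(g)$ itself); even after your deformation-theoretic step you would still have to check equivariance and choose the same $\Gamma$ on both sides, as the paper does before concluding $\Int(g)=\Int(\wt g)$ from Definition \ref{def:intersection}. If you wish to complete your route, the missing input is precisely the content of Lemma \ref{lem:Ig}; with it in hand, no twist of the ambient spaces $\mcM_{1/2}$, $\mcM_0$ or Dieudonn\'e-theoretic local computation is required.
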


The proof of Theorem \ref{thm:auxiliary} will be constructive in the sense that we work with explicit orbit representatives $γ$ resp. $g$ and then define concrete elements $\wt{γ}$ and $\wt g$.

The condition on the invariants of $γ \in G'$ resp. $g\in G_{1/2}$ or $g\in G_{1/2, \mbY}$ in Theorem \ref{thm:auxiliary} can also be formulated by saying that $z_γ^2/π \in L_γ$ resp. $z_g^2/π\in L_g$ are topologically nilpotent. It is clear that Theorem \ref{thm:auxiliary} immediately implies Theorems \ref{thm:FL} and \ref{thm:main} for such elements. Moreover, the cases where $z_γ^2/π$ or $z_g^2/π$ are not integral over $\O_F$ are trivial: Here, all orbital integrals or intersection numbers in question vanish, a fact proven in Corollary \ref{vanish_par}, Lemma \ref{vanish_b} and Lemma \ref{lem:integrality_y}. This leaves the edge cases where $z_γ^2/π$ or $z_g^2/π$ are integral but not topologically nilpotent. We will prove some auxiliary results (Corollary \ref{reduction}, Lemma \ref{reduction_b}, Proposition \ref{prop:factorization_geometric}) that allow to split off the non-topologically nilpotent part and to treat it separately.

\section{Fundamental Lemma}
\label{s:FL}
In this section we prove parts (1) and (2) of Theorem \ref{thm:auxiliary}. As a corollary we will obtain the fundamental lemma for $GL_n(B)$ in Theorem \ref{thm:main}. Our main tool is the combinatorial interpretation of orbital integrals in terms of lattice counts.

\subsection{Orbital integrals on $G^\prime$}
\label{ss:Gprime}
Our aim in this section is to relate the orbital integrals $\Orb(γ,f^\prime_{1/2}, s)$ and $\Orb(γ,f^\prime_0, s)$ for regular semi-simple $\gamma \in G^\prime_{\rs}$. They both only depend on the double coset $H'γH'$. Hence it suffices to consider elements of the form $\gamma(x) = \begin{psmallmatrix}
1 & x \\
1 & 1
\end{psmallmatrix}$, where $x\in GL_n(F)$ and where $1 \in GL_n(F)$ denotes the unit element. Recall that we define $z_γ = γ_+^{-1}γ_-$ and $L_\gamma = F[z^2_\gamma]$. Note that $z_{\gamma(x)} = \begin{psmallmatrix}
0 & x \\
1 & 0
\end{psmallmatrix}$ and $z^2_{\gamma(x)} = \begin{psmallmatrix}
x & 0 \\
0 & x
\end{psmallmatrix}$. So it is by definition that the invariant polynomial $\Inv(\gamma(x),T)$ of $γ(x)$ equals the characteristic polynomial of $x$. We next recall some definitions and results from \cite{LM2}, but specialized to our quaternion algebra setting. Let $K = F\times F$ be as in \S\ref{ss:setting}.

\defi{[\protect{\cite[Definition 3.20]{LM2}}] Let $\mcL$ denote the set of $O_K$-lattices in $F^{2n}$. For $γ\in G'_\rs$ regular semi-simple and $λ \in \{0, 1/2\}$, we define $\mcL_λ(γ)$ in the following way. For $λ = 0$, we put
\eqn{\sL_0(\gamma):= \{ \Lambda \in \sL \mid \gamma \Lambda \in \sL \}.}
For $λ = 1/2$, and where $(π, 1)\in O_K$ is the element $\mr{diag}(π, 1)$, we put
\eqn{\sL_{1/2}(\gamma):= \{ Λ\in \sL \mid z_γ^2Λ\subseteq z_γ(π,1)Λ \subseteq πΛ\}.}
These two sets equal the set $\mcL(γ)$ from \cite[Definition 3.20]{LM2} in the two cases of Hasse invariant $0$ and $1/2$. In the case of Hasse invariant $1/2$, we have furthermore used the equivalent description in \cite[(3.26)]{LM2}.
}
Note that the $O_K$-lattices $Λ\in \mcL$ are precisely the direct sums $Λ_+\oplus Λ_-$ of $O_F$-lattices $Λ_+, Λ_-\subset F^n$. We use this to give a more concrete description of $\mcL_{1/2}(γ(x))$:
\defi{For $x\in GL_n(F)$, define
\eqn{\label{eq:Lx}\sL(x) := \left\{(\Lambda_+, \Lambda_-) \text{ pairs of $O_F$-lattices in $F^n$} \mid x\Lambda_- \subseteq \Lambda_+ \subseteq \Lambda_-\right\}.}
Since $z_{\gamma(x)} = \begin{psmallmatrix}
0 & x \\
1 & 0
\end{psmallmatrix}$, it follows directly from definitions that
$$\begin{array}{rcl}
\mcL(x/π) & \overset{\sim}{\lr} & \mcL_{1/2}(γ(x))\\[4pt]
(Λ_+, Λ_-) & \longmapsto & Λ_+ \oplus Λ_-.
\end{array}$$
}

\defi{[\protect{\cite[Definition 3.20]{LM2}}] \label{def:omega} For a regular semi-simple element $γ\in G'_\rs$ and a lattice $Λ\in \mcL_0(γ)$, we put
\eqn{Ω(γ, Λ, s) := Ω(h^{-1}_1 γh_2 , s),}
where $h_1, h_2 \in H^\prime$ are chosen such that $\Lambda = h_1 \O^{2n}_F$ and $\gamma\Lambda = h_2 \O^{2n}_F$.
}

For $x\in GL_n(F)$, we let $L_x = F[x] \subseteq M_n(F)$ be the generated $F$-algebra. For a pair $\Lambda = (\Lambda_+, \Lambda_-) \in \sL(x)$, we denote by $R_{\Lambda} \subset L_x$ the stabilizer of $\Lambda$ under the diagonal action of $L_x$ on lattices. We can now state the combinatorial interpretation of orbital integrals from \cite[Equation (3.25)]{LM2} of the test function $f'_{1/2}$:
\eqn{\label{eq:orb_formula}\Orb(γ(x),f^\prime_{1/2}, s) = q^{-ns}\sum_{\Lambda = (\Lambda_+,\Lambda_-) \in \sL(x/\pi)/L_x^\times} 
[\O_{L_x}^{\times}:R^{\times}_{\Lambda}]\,\, Ω(γ(x),\, Λ_+ \oplus \Lambda_-,\, s).}

\begin{cor}\label{vanish_par}
If $x/\pi$ is not integral over $O_F$, then the orbital integral $\Orb(γ(x),f^\prime_{1/2},s)$ vanishes identically.
\end{cor}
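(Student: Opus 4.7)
The plan is to read off the corollary directly from the lattice-count formula \eqref{eq:orb_formula}, by showing that the indexing set $\sL(x/\pi)$ is empty whenever $x/\pi$ is not integral over $\O_F$.

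First I would unwind the definition \eqref{eq:Lx}: a pair $(\Lambda_+, \Lambda_-) \in \sL(x/\pi)$ satisfies the chain of inclusions $(x/\pi)\Lambda_- \subseteq \Lambda_+ \subseteq \Lambda_-$. In particular, the outer inclusions combine to give $(x/\pi)\Lambda_- \subseteq \Lambda_-$, i.e.\ the $O_F$-lattice $\Lambda_-\subset F^n$ is stable under multiplication by $x/\pi$.

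Next I would invoke the standard fact that an element $y\in \M_n(F)$ which preserves some full $O_F$-lattice is integral over $O_F$: choosing an $O_F$-basis of $\Lambda_-$ expresses $y$ as a matrix with entries in $O_F$, so the Cayley--Hamilton theorem shows that $y$ satisfies a monic polynomial with coefficients in $O_F$. Applied to $y = x/\pi$, this contradicts the hypothesis that $x/\pi$ is not integral over $O_F$. Hence $\sL(x/\pi)=\emptyset$.

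Finally, the formula \eqref{eq:orb_formula} presents $\Orb(\gamma(x), f'_{1/2}, s)$ as $q^{-ns}$ times a sum indexed by $\sL(x/\pi)/L_x^\times$. An empty indexing set gives an empty sum, so $\Orb(\gamma(x), f'_{1/2}, s) \equiv 0$ for all $s$. There is no real obstacle here; the only thing to be slightly careful about is that the lattice condition in \eqref{eq:Lx} indeed forces $\Lambda_-$ to be $(x/\pi)$-stable, which is immediate from the chain of inclusions.
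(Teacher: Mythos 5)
Your proof is correct and is exactly the paper's argument: the paper states in one line that $\sL(x/\pi)$ is empty when $x/\pi$ is not integral, and your write-up simply fills in the (entirely standard) justification that the chain $(x/\pi)\Lambda_-\subseteq\Lambda_+\subseteq\Lambda_-$ forces $\Lambda_-$ to be $(x/\pi)$-stable, which would make $x/\pi$ integral.
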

\begin{proof}
The set $\mcL(x/π)$ is empty if $x/π$ is not integral.
\end{proof}

Assume now that $x$ is topologically nilpotent. Then \cite[Lemma 3.21 (1)]{LM2} states that there is a bijection
$$\begin{array}{rcl}
\mcL(x) & \overset{\sim}{\lr} & \mcL_0(γ(x))\\[4pt]
(Λ_+, Λ_-) & \longmapsto & Λ_+ \oplus Λ_-.
\end{array}$$
Identity \cite[Equation (3.25)]{LM2} then specializes to
\eqn{\label{eq:orb_formulasph}\Orb(γ(x),1_{GL_{2n}(\O_F)}, s) = \sum_{\Lambda = (\Lambda_+,\Lambda_-) \in \sL(x)/L_x^\times} 
[\O_{L_x}^{\times}:R^{\times}_{\Lambda}]\,\, Ω(γ(x),\, Λ_+ \oplus \Lambda_-,\, s).}

Combining \eqref{eq:orb_formula} and \eqref{eq:orb_formulasph} yields a proof of Theorem \ref{thm:auxiliary} (1):

\begin{proof}[Proof of Theorem \ref{thm:auxiliary} (1)]
First recall the statement: Let $γ\in G'_\rs$ be regular semi-simple. The statement we would like to prove only depends on the orbit of $γ$, so we may assume without loss of generality that $\gamma = \gamma(x)$ for some $x\in GL_n(F)$. Set $\wt x = x/π$. The assumption is
$$\mr{char}(\wt x; T) = π^{-n}\Inv(γ(x), πT) \in T^n + π O_F[T]$$
which implies that $\wt{x}$ is topologically nilpotent. In particular, $\wt{\gamma} := \gamma(\wt{x})$ lies in $GL_{2n}(F)$ and is regular semi-simple. It satisfies $\Inv(\wt{γ}; T) = π^{-n}\Inv(γ; πT)$ and our task is to show that
\eqn{\label{eq:id1}\Orb(γ,f^\prime_{1/2},s) =\Orb(\wt{γ},1_{GL_{2n}(\O_F)},s).}
To this end, we compare the two quantities $Ω(γ, Λ_+ \oplus \Lambda_-, s)$ and $Ω(\wt{\gamma}, Λ_+ \oplus \Lambda_-, s)$ for $(\Lambda_+, \Lambda_-) \in \mcL(\wt x)$. Choose $h_1$ and $h_2$ in $H'$ such that $h_1\O^{2n}_F = Λ_+ \oplus \Lambda_-$ and $h_2\O^{2n}_F = \gamma(Λ_+ \oplus \Lambda_-)$. Write $h^{-1}_1 = \begin{psmallmatrix}
a &  \\
 & b
\end{psmallmatrix}$ and $h_2 = \begin{psmallmatrix}
c &  \\
 & d
\end{psmallmatrix}$. We obtain from Definition \ref{def:omega} that
\eqn{\begin{aligned}
Ω(\wt{γ},\, Λ_+ \oplus \Lambda_-,\, s) & = \Omega\left(\begin{pmatrix}
ac & axd/π \\
bc & bd
\end{pmatrix},\, s\right)\\
& = \Omega\left(\begin{pmatrix}
ac & axd \\
bc & bd
\end{pmatrix},\, s\right)q^{-ns} = Ω(γ,\, Λ_+ \oplus \Lambda_-,\, s)q^{-ns}.
\end{aligned}}
Substituting this into \eqref{eq:orb_formulasph} for $\wt{\gamma}$ yields \eqref{eq:orb_formula} for $\gamma$ which proves \eqref{eq:id1} as desired.
\end{proof}

Note that if $(\Lambda_+, \Lambda_-) \in \sL(x)$, then both $\Lambda_+$ and $\Lambda_-$ are $O_F[x]$-modules. For this reason, we next focus on the ring $O_F[x]$, in particular on its idempotents.

\lemm{\label{lemm: hensel} Assume that $x\in GL_n(F)$ is integral over $O_F$. Then there is a unique way to write $\O_F[x]$ as a product $R_0 \times R_1$ such that the image $(x_0, x_1)$ of $x$ has the property that $x_0$ is topologically nilpotent and $x_1 \in R_1^\times$ a unit.}
\pf{As $x$ was assumed to be integral over $\O_F$, the ring $O_F[x]$ is of the form $\O_F[T]/(P(T))$ for some monic polynomial $P(T) \in \O_F[T]$. We consider the reduction $\overbar{P}$ of $P$ modulo $\pi$. It will factor as $\overbar{P} = T^m f(T)$ with $f(0) \neq 0$. By Hensel's lemma, this factorization lifts to a factorization of $P$ which defines the desired factorization $R_0\times R_1$.
}

\begin{cor}\label{reduction}
Let $x\in GL_n(F)$ have the property that $\wt x = x/π$ is integral over $O_F$. Let $O_F[\wt x] = R_0 \times R_1$ be the factorization from Lemma \ref{lemm: hensel} with respect to $\wt x$ and let $(x_0, x_1)$ denote the components of $x$. Let $F^n = V_0\times V_1$ be the corresponding factorization of $F^n$ and fix isomorphisms $V_i \iso F^{n_i}$. Then $γ = γ(x)$ lies in $G'_\rs$ while $γ_0 = γ(x_0)$ and $γ_1 = γ(x_1)$ lie in $GL_{2n_i}(F)_\rs$. There is an identity of orbital integrals 
\eqn{\Orb(γ,f^\prime_{1/2},s) = \Orb(γ_0,f^\prime_{1/2, n_0},s)\Orb(γ_1,f^\prime_{1/2, n_1},s)}
where the test functions on the right hand side are meant in the sense of \eqref{eq:test_functions_1_2} but on $GL_{2n_i}(F)$.
\end{cor}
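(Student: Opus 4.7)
The plan is to apply the lattice-counting formula \eqref{eq:orb_formula} on both sides and show that every ingredient in the sum factors in accordance with the Hensel decomposition $F^n = V_0 \oplus V_1$.

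First I would verify regular semi-simplicity of the components. Since $\Inv(\gamma(x); T) = \mr{char}(x; T) = \mr{char}(x_0; T) \cdot \mr{char}(x_1; T)$, the assumption that $\gamma$ is regular semi-simple (i.e.\ $\mr{char}(x; T)$ separable) forces each factor to be separable. Hence $\gamma_i \in GL_{2n_i}(F)_{\rs}$. Note moreover that $x_1 = \pi \wt x_1$ is invertible because $\wt x_1$ is a unit in $R_1$, while $x_0 = \pi \wt x_0$ is invertible because $\wt x = x/\pi \in GL_n(F)$ and $\wt x_0$ is obtained from $\wt x$ by cutting down to a direct factor.

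The heart of the argument is a bijection of indexing sets. Given any $(\Lambda_+, \Lambda_-) \in \mcL(\wt x)$, both lattices are $O_F[\wt x]$-stable: for $\Lambda_-$ directly from $\wt x \Lambda_- \subseteq \Lambda_+ \subseteq \Lambda_-$, and for $\Lambda_+$ via $\wt x \Lambda_+ \subseteq \wt x \Lambda_- \subseteq \Lambda_+$. The decomposition $O_F[\wt x] = R_0 \times R_1$ then produces idempotents that split $\Lambda_\pm = \Lambda_{\pm, 0} \oplus \Lambda_{\pm, 1}$ with $\Lambda_{\pm, i} \subset V_i$ and $\wt x_i \Lambda_{-, i} \subseteq \Lambda_{+, i} \subseteq \Lambda_{-, i}$. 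This yields an $L_x^\times$-equivariant bijection
$$\mcL(\wt x) \iso \mcL(\wt x_0) \times \mcL(\wt x_1),$$
using $L_x^\times = L_{x_0}^\times \times L_{x_1}^\times$.

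Next I would check multiplicativity of the remaining ingredients. The ring $O_{L_x} = O_{L_{x_0}} \times O_{L_{x_1}}$ and the stabilizer $R_\Lambda$ factors as $R_{\Lambda_0} \times R_{\Lambda_1}$, so $[O_{L_x}^\times : R_\Lambda^\times] = [O_{L_{x_0}}^\times : R_{\Lambda_0}^\times] \cdot [O_{L_{x_1}}^\times : R_{\Lambda_1}^\times]$. For the transfer factor, one chooses $h_1, h_2 \in H'$ in block-diagonal form with respect to $V_0 \oplus V_1$; then $h_1^{-1}\gamma(x) h_2$ becomes block-diagonal after an obvious permutation of coordinates, with $2n_0 \times 2n_0$ and $2n_1 \times 2n_1$ blocks $h_{1, i}^{-1}\gamma(x_i) h_{2, i}$, and $\Omega$ is manifestly multiplicative from its defining formula via determinants. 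Combined with $q^{-ns} = q^{-n_0 s}\, q^{-n_1 s}$, assembling these multiplicativities in \eqref{eq:orb_formula} gives the claimed identity.

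The main subtlety is the compatibility of the transfer factor $\Omega$ with the block decomposition. Since $\Omega$ is built from $\eta\circ \det$ and $|{\det}|^s$, multiplicativity across block-diagonal matrices is clear in principle; the care required lies in checking that the block-diagonal representatives $h_1, h_2$ are compatible with the splittings $\Lambda_\pm = \Lambda_{\pm, 0} \oplus \Lambda_{\pm, 1}$, and that the reordering of coordinates does not introduce a sign via $\eta$. This is routine bookkeeping with the explicit definition of $\Omega$.
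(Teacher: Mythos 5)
Your proposal is correct and follows essentially the same route as the paper: decompose the lattices in $\mcL(\wt x)$ via the idempotents of $O_F[\wt x]=R_0\times R_1$ to obtain an $L_x^\times$-equivariant bijection $\mcL(\wt x)\iso \mcL(x_0/\pi)\times\mcL(x_1/\pi)$, observe that the index $[O_{L_x}^\times:R_\Lambda^\times]$ and the transfer factor $\Omega$ are multiplicative in such direct sums, and conclude from \eqref{eq:orb_formula}. The paper's proof is just a terser version of this, leaving the verification of the multiplicativity of $\Omega$ (which you rightly flag as the only point needing care) to the reader.
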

\pf{
Every $O_F[\wt x]$-lattice is a direct sum of an $R_0$-lattice and an $R_1$-lattice. It is easily seen that this defines a bijection $\sL(\wt x)\overset{\sim}{\to} \sL(x_0/π) \times \sL(x_1/π)$. Furthermore, the definition of $Ω(γ, Λ_+ \oplus \Lambda_-, s)$ is multiplicative in such direct sums. The desired identity then follows from \eqref{eq:orb_formula}.
}

Identity \eqref{eq:id1} already covers the factor $\Orb(γ_0, f'_{1/2, n_0},s)$, so we now turn to the factor $\Orb(γ_1, f'_{1/2, n_1}, s)$. For elements $x\in GL_n(F)$ that are regular semi-simple in the usual sense, we consider the conjugation orbital integral
\eqn{\label{eq:orb_int} \Orb(x,1_{GL_n(\O_F)}) = \int_{GL_n(F)/L_x^\times} 1_{GL_n(O_F)} ( y^{-1}xy)dy,
}
where $L_x = F[x]$ as before and where $dy$ is the Haar measure on $GL_n(F)/L_x^\times$ that is normalized by
\eqn{\Vol(GL_n(\O_F)) = \Vol(\O^\times_{L_x}) = 1.}

\lemm{\label{aux1lattice} Assume that $x\in GL_n(F)$ has the property that $\wt x = x/π$ is integral over $O_F$ with $\det(\wt x) \in O_F^\times$. Then there is an identity of orbital integrals 
\eqn{\Orb(γ(x),f^\prime_{1/2},s) = \Orb(\wt{x},1_{GL_n(\O_F)}).}}
\pf{
We again use \eqref{eq:orb_formula} to express the left hand side. A general identity (see \cite[Lemma 3.23]{LM2}) states that the transfer factor of a lattice $Λ = Λ_+\oplus Λ_-\in \mcL_0(γ)$ is given by
\eqn{\label{eq:tr_fact} Ω(γ, Λ_+ \oplus \Lambda_-, s) = (-1)^{[(\gamma\Lambda)_-:z\Lambda_+] + [(\gamma\Lambda)_-:\Lambda_-]}q^{([(\gamma\Lambda)_+:z\Lambda_-] + [(\gamma\Lambda)_-:z\Lambda_+])s},}
where $z = z_γ$ and where $[Λ_1:Λ_2]$ denotes the length of $Λ_1/Λ_2$. In the situation of \eqref{eq:orb_formula}, we apply this formula to the element $γ = γ(x)$ and a lattice $Λ = Λ_+ \oplus Λ_-$ with $(Λ_+, Λ_-) \in \mcL(\wt x)$. We have that $z = \begin{psmallmatrix}
0 & x \\
1 & 0
\end{psmallmatrix}$ and that $γΛ = Λ$ because $x$ is topologically nilpotent under the assumption $\det(\wt x) \in O_F^\times$. The assumption moreover implies that any $\mcL(\wt x)$ is the set of lattice pairs $(Λ_+, Λ_-)$ that satisfy
$$\wt x Λ_+ = Λ_- = Λ_+.$$
We see that $z\Lambda_+ = \Lambda_-$ and $zΛ_- = πΛ_+$ for all such $(Λ_+, Λ_-)$. Substituting this in \eqref{eq:tr_fact} gives $Ω(γ(x), Λ_+ \oplus \Lambda_-, s) = q^{ns}$ for all $(Λ_+, Λ_-) \in \sL(\wt x)$ and hence
\eqn{\Orb(γ,f^\prime_{1/2},s) = q^{ns}q^{-ns}\sum_{\Lambda = (\Lambda_+,\Lambda_-) \in \sL(\wt x)} 
[\O_{L_x}^{\times}:R^{\times}_{\Lambda}].
}
This is precisely the combinatorial description of $\Orb(\wt{x},1_{GL_n(\O_F)})$ and the proof is complete.
}

\subsection{Orbital integrals on $G_\lambda$}
\label{ss:G}
Recall that $E/F$ is an unramified quadratic extension and that $B_λ$ denotes a quaternion algebra over $F$ of Hasse invariant $λ\in \{0,1/2\}$ with an embedding $E\to B_λ$. Recall that $σ\in \Gal(E/F)$ denotes the non-trivial element. Our aim in this section is to relate the orbital integrals $\Orb(-, f_0)$ and $\Orb(-, f_{1/2})$, where $f_λ \in C^\infty_c(G_λ)$ is the characteristic function of $GL_n(O_{B_λ})$. Put $ε = 2λ \in \{0, 1\}$ and fix an element $\varpi \in O_{B_λ}$ that satisfies
\begin{equation}\label{uniformizer_of_quaternions}
    \varpi^2 = π^ε\quand \varpi a = σ(a)\varpi\ \ \text{for $a\in E$}.
\end{equation}
For $x\in GL_n(E)$, we define $g(x) = 1 + x \varpi \in M_n(B_λ)$. This element lies in $G_{λ, \rs}$ if and only if the characteristic polynomial of $z_{g(x)}^2 = xσ(x)π^ε \in GL_n(E)$ is separable and does not vanish at $0$ or $1$. In this case, $\Inv(g(x);T) = \mr{char}(xσ(x)π^ε;T)$. Since $\Orb(g, f_λ)$ only depends on the double coset $HgH$, we may restrict attention to group elements of the form $g(x)$.

\defi{\label{L_b} Denote by $σ:E^n\to E^n$ the coordinate-wise Galois conjugation. For $x\in GL_n(E)$, define $\sL^\sigma(x)$ as the set of $\O_E$-lattices $Λ\subset E^n$ that satisfy
\eqn{x σ(\Lambda) \subseteq \Lambda.}
Moreover, define $L_x \subset M_n(E)$ as the subalgebra $F[xσ(x)π^ε]$. Then $L_x^\times$ acts by multiplication on $\mcL^σ(x)$.
}

\lemm{\label{aux2quat} Let $λ\in \{0, 1/2\}$ be any and let $B = B_λ$. Let $g = 1 + x \varpi \in G_{λ,\rs}$ be regular semi-simple. Suppose further that $xσ(x)\pi^{\varepsilon}$ is topologically nilpotent. Then there is the identity
\eqn{\label{eq:orb55} \Orb(g(x),1_{GL_n(O_B)}) = \sum_{\Lambda \in \sL^\sigma(x)/L_x^\times} 
[O_{L_x}^{\times}:R^{\times}_{\Lambda}].}
Here $R_{\Lambda} \subseteq L_x$ denotes the order that stabilizes $\Lambda$. 
}
\pf{The assumption that $xσ(x)π^ε=(x\varpi)^2$ is topologically nilpotent implies that the determinant $\det(g(x))$ lies in $O_F^\times$. (Here and in the following, the determinant is meant in the sense of the reduced norm $GL_n(B)\to F^\times$.) Hence the condition $h_1^{-1}g h_2 \in GL_n(O_B)$ holds if and only if $h_1^{-1}g h_2 \in M_n(O_B)$. Since $h_1^{-1}g h_2 = h_1^{-1} h_2(1 + h_2^{-1}x\varpi h_2)$ and since $h_2^{-1}x\varpi h_2$ is topologically nilpotent by assumption, this is equivalent to $h_1^{-1} h_2 \in GL_n(O_E)$ and $h_2^{-1} x  σ(h_2) \in M_n(O_E)$. Given a pair $(h_1, h_2)$ with $h_1^{-1}h_2 \in GL_n(O_E)$, consider the lattice $\Lambda = h_1 O_E^n= h_2 O_E^n$. Then $h_2^{-1} x σ(h_2)$ lies in $M_n(O_E)$ if and only if $xσ(Λ) \subseteq \Lambda$. Rewriting the definition of $\Orb(g, 1_{GL_n(O_B)})$ in this way gives \eqref{eq:orb55}.
}

\begin{lem}\label{vanish_b}
Let $g \in G_{1/2,\rs}$ and $B = B_{1/2}$. The orbital integral $\Orb(g, 1_{GL_n(O_B)})$ vanishes if $z^2_g/\pi$ is not integral over $O_F$.
\end{lem}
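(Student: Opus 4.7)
I aim to show that no $(h_1, h_2) \in H \times H$ can satisfy $h_1^{-1} g h_2 \in GL_n(O_B)$, which makes the integrand of \eqref{eq:def_orb_int_2} vanish identically. Since $\Orb(g, f_{1/2})$ depends only on the double coset $HgH$, I may assume without loss of generality that $g = g(x) = 1 + x\varpi$ for some $x \in GL_n(E)$, so that $z_g^2/\pi = x\sigma(x)$. Using the relation $\varpi h = \sigma(h)\varpi$ for $h \in M_n(E)$, I decompose
\[
h_1^{-1} g h_2 = a + b\varpi, \qquad a := h_1^{-1} h_2, \quad b := h_1^{-1} x \sigma(h_2),
\]
with $a, b \in M_n(E)$. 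Since $M_n(O_B) = M_n(O_E) \oplus M_n(O_E)\varpi$, the membership $h_1^{-1} g h_2 \in M_n(O_B)$ is equivalent to $a, b \in M_n(O_E)$.

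The crux is to upgrade this to $a \in GL_n(O_E)$ using the reduced norm. I employ the splitting $B \otimes_F E \iso M_2(E)$ sending $\varpi \mapsto \begin{psmallmatrix} 0 & \pi \\ 1 & 0 \end{psmallmatrix}$ and $c \mapsto \diag(c, \sigma(c))$ for $c \in E$. Extending to $M_n(B) \otimes_F E \iso M_{2n}(E)$, the reduced norm of $a + b\varpi$ becomes the determinant of a $2n \times 2n$ block matrix which, modulo $\pi$, is block-triangular with diagonal blocks $a \mod \pi$ and $\sigma(a) \mod \pi$. Hence this reduced norm reduces to $N_{E/F}(\det a) \mod \pi$, and the unit condition $\det(h_1^{-1} g h_2) \in O_F^\times$ forces $\det a \in O_E^\times$, i.e., $a \in GL_n(O_E)$.

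With $a \in GL_n(O_E)$ in hand, the $O_E$-lattice $\Lambda := h_1 O_E^n = h_2 O_E^n$ is well-defined, and $b \in M_n(O_E)$ translates to $x \sigma(\Lambda) \subseteq \Lambda$, that is, $\Lambda \in \mcL^\sigma(x)$. Applying $\sigma$ gives $\sigma(x) \Lambda \subseteq \sigma(\Lambda)$, whence $x\sigma(x)\Lambda \subseteq x \sigma(\Lambda) \subseteq \Lambda$: so $x\sigma(x)$ stabilizes the $O_E$-lattice $\Lambda$ in $E^n$ and is therefore integral over $O_E$, hence over $O_F$. This contradicts the hypothesis that $z_g^2/\pi = x\sigma(x)$ is not integral, so the integrand vanishes identically. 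The only nontrivial input is the reduced norm computation via the splitting field $E$, which is a routine calculation in $M_n(B) \otimes_F E$.
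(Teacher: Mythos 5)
Your proof is correct and follows essentially the same route as the paper: reduce to $g = g(x)$, split $h_1^{-1}gh_2 = a + b\varpi$, deduce $a\in GL_n(O_E)$ and $b\in M_n(O_E)$, and conclude that $x\sigma(x)$ stabilizes the lattice $\Lambda = h_1O_E^n = h_2O_E^n$. The only cosmetic difference is that you establish $a\in GL_n(O_E)$ by a reduced-norm computation over the splitting field $E$, whereas the paper gets it directly from $GL_n(O_B) = GL_n(O_E) + M_n(O_E)\varpi$ (i.e.\ from $\varpi O_B$ being the Jacobson radical of the maximal order); both are fine.
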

\pf{Since $\Orb(g, 1_{GL_n(O_B)})$ only depends on the orbit of $g$ and since every regular semi-simple orbit contains a representative of the form $g(x)$ we may assume without loss of generality that $g = g(x)$ for some $x\in GL_n(E)$. In this case $z^2_g/\pi = xσ(x)$. Moreover, $GL_n(O_B) = GL_n(O_E) + M_n(O_E)\varpi$ because $B$ is the division algebra. So $h_1^{-1} h_2 + h_1^{-1} x σ(h_2) \varpi \in GL_n(O_B)$ with $h_i \in GL_n(E)$ can only hold if $h_1^{-1} h_2 \in GL_n(O_E)$ and $h_2^{-1} xσ(h_2) \in M_n(O_E)$. The second condition implies that $xσ(x)$ is integral over $O_F$ because it is conjugate to $h_2^{-1}xσ(h_2) σ(h_2^{-1}xσ(h_2))$. This was to be shown.}

\begin{proof}[Proof of Theorem \ref{thm:auxiliary} (2)]
Let $g\in G_{1/2, \rs}$ be such that $π^{-n}\Inv(g; πT) \in T^n + πO_F[T]$. Denote by $\varpi_{λ}\in B_λ$ the element fixed in \eqref{uniformizer_of_quaternions}. Without loss of generality we may assume that $g = 1 + x \varpi_{1/2}$ for some $x\in GL_n(E)$. The assumption on $g$ is then equivalent to $xσ(x)$ being topologically nilpotent. The element $\wt g = 1 + x \varpi_{0}$ hence lies in $G_{0, \rs}$ and satisfies $\Inv(\wt g;T) = π^{-n}\Inv(g; πT)$. Lemma \ref{aux2quat} applies to both $g$ and $\wt g$, and yields
\begin{equation}\label{eq:orb56}
\Orb(g, 1_{GL_n(O_B)}) = \Orb(\wt g, 1_{GL_{2n}(O_F)})
\end{equation}
as desired.
\end{proof}

We next prove a factorization of the orbital integral that is analogous to that in Lemma \ref{reduction}.

\lemm{\label{reduction_b} Let $B = B_{1/2}$ and assume that $g(x) \in G_{1/2,\rs}$ is such that $xσ(x)$ is integral over $O_F$. Let $O_F[xσ(x)] = R_0\times R_1$ be as in Lemma \ref{lemm: hensel}. Let $E^n = V_0\times V_1$ be the induced decomposition of $E^n$ which is preserved by $x$. Write $(x_0, x_1)$ for its components and choose isomorphisms $V_i \iso E^{n_i}$. Put $g_i = 1 + x_i\varpi\in GL_{n_i}(B)$. Then there is an identity of orbital integrals 
\eqn{\Orb(g,1_{GL_n(O_B)})=\Orb(g_0,1_{GL_{n_0}(O_B)})\Orb(g_1,1_{GL_{n_1}(O_B)}).}
}
\pf{
Every $O_F[xσ(x)]$-lattice is a direct sum of an $R_0$-lattice and an $R_1$-lattice. This defines a bijection $\sL^\sigma(x)\overset{\sim}{\to} \mcL^σ(x_0)\times \mcL^σ(x_1)$. The desired identity now follows from Lemma \ref{aux2quat} which can be applied to all three orbital integrals because $xσ(x)$, $x_0σ(x_0)$ and $x_1σ(x_1)$ are all integral by assumption.
}

Identity \eqref{eq:orb56} already covers the factor $\Orb(g_0, 1_{GL_{n_0}(O_B)})$, so we now turn to the factor $\Orb(g_1, 1_{GL_{n_1}(O_B)})$. Let $x \in GL_n(O_E)$ be an element that is regular semi-simple with respect to the $σ$-twisted conjugation action $(y,x) \mapsto y^{-1} x σ(y)$. Its stabilizer then equals $L^\times_x$ where $L_x = F[xσ(x)]$. We normalize the Haar measure $dy$ on $GL_n(E)/L_x^\times$ by
\eqn{\Vol(GL_n(O_E)) = \Vol(\O^\times_{L_x}) = 1}
and define a twisted orbital integral by
\begin{equation}\label{eq:twisted_orb_int}
\Orb^\sigma(x,1_{GL_n(O_E)}) := \int_{GL_n(E)/L_x^\times} 1_{GL_n(O_E)} ( y^{-1} x σ(y)) dy.
\end{equation}

\lemm{\label{aux2lattice}Let $B = B_{1/2}$ and let $x\in GL_n(E)$ be such that $xσ(x)$ is integral over $O_F$ with $\det(xσ(x))\in O_F^\times$. Then there is the identity
\begin{equation}\label{eq:twisted_comp}
\Orb(g(x), 1_{GL_n(O_B)}) = \Orb^\sigma(x,1_{GL_n(O_E)}).
\end{equation}
}
\pf{The assumption that $xσ(x)$ is integrally invertible implies that $\mcL^σ(x)$ is the set of $O_E$-lattices $Λ\subseteq E^n$ such that $xσ(Λ) = Λ$. In this case, the lattice counting expression in Lemma \ref{aux2quat} equals $\Orb^\sigma(x,1_{GL_n(O_E)})$.
}

\subsection{Vanishing orders}
\label{ss:vanishing}
The FL and the AFL both include vanishing statements. In order to prove these, we now recall some results from \cite{LM1} for the case $λ = 0$.

By regular semi-simple invariant (of degree $n$), we mean a degree $n$ polynomial $δ\in F[T]$ that is monic, separable and satisfies $δ(1)δ(0)\neq 0$. These are precisely the polynomials that arise as invariant polynomials of elements $γ\in G'_\rs$. Given a regular semi-simple $δ\in F[T]$, we define
\begin{equation}\label{eq:L_B}
L_δ := F[z^2]/(δ(z^2))\quand B_δ := (E\tensor_F L_δ)[z]/(z(a\tensor b) = (σ(a)\tensor b)z)_{a\in E,\ b\in L_δ}.
\end{equation}
Then $L_δ$ is an étale $F$-algebra of degree $n = \deg(δ)$ by separability, and $B_δ/L_δ$ is a quaternion algebra. Moreover, $B_δ$ contains $E$ by construction. Let $L_δ = \prod_{i\in I} L_i$ and $B_δ = \prod_{i\in I} B_i$ be the factorizations of $L_δ$ and $B_δ$ according to the idempotents in $L_δ$. The algebraic vanishing order of $δ$ from \cite[Definition 2.15]{LM1} is defined as the integer
$$\ord_0(δ) := \#\{i\in I\mid \text{$B_i$ is a division algebra}\}.$$
One of the main results of \cite{LM1} is a factorization formula for $\Orb(γ, 1_{GL_{2n}(O_F)}, s)$. It implies, see \cite[Corollary 2.17]{LM1}, that for every regular semi-simple $γ\in G'_{\mr{rs}}$,
\begin{equation}\label{eq:vanishing_0}
\ord_{s = 0} \Orb(γ, 1_{GL_{2n}}(O_F), s) \geq \ord_0(\Inv(γ)).
\end{equation}
Our aim is to formulate an analogous result in the case of invariant $λ = 1/2$.
\begin{defn}\label{ord_1/2}
Let $δ\in F[T]$ be a regular semi-simple invariant of degree $n$ and let $L_δ$, $B_δ$ be defined as in \eqref{eq:L_B}. Let $B = B_{1/2}$ be the quaternion division algebra over $F$. Define
$$\ord_{1/2}(δ) := \#\{i\in I \mid B_i \not\iso B\tensor_F L_i\}.$$
In other words, $\ord_{1/2}(δ)$ is the number of indices such that $B_i$ is split and $[L_i:F]$ odd, or such that $B_i$ is division and $[L_i:F]$ even. It is checked with a simple case distinction that
\begin{equation}\label{eq:rel_alg_orders}
\ord_{1/2}(δ(T)) = \ord_{0}(π^{-n}δ(πT)).
\end{equation}
\end{defn}
\begin{cor}\label{cor:vanishing}
Let $γ\in G'_{\mr{rs}}$ be a regular semi-simple element. Then
\begin{equation}\label{eq:vanishing}
\ord_{s = 0} \Orb(γ, f'_{1/2}, s) \geq \ord_{1/2}(\Inv(γ)).
\end{equation}
\end{cor}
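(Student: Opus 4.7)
The plan is to reduce the bound to the already-established vanishing bound \eqref{eq:vanishing_0} at $\lambda = 0$ by splitting off the topologically nilpotent part of the invariant. Without loss of generality I may replace $\gamma$ by a standard representative $\gamma(x)$ with $x\in GL_n(F)$. If $x/\pi$ fails to be integral over $O_F$, Corollary \ref{vanish_par} forces $\Orb(\gamma, f'_{1/2}, s)$ to vanish identically and the bound is automatic. Otherwise, Lemma \ref{lemm: hensel} applied to $\wt x = x/\pi$ produces a decomposition $O_F[\wt x] = R_0\times R_1$ and thereby $x = (x_0, x_1)$ with $\wt x_0$ topologically nilpotent and $\wt x_1$ a unit. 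Corollary \ref{reduction} then factors the orbital integral as $\Orb(\gamma, f'_{1/2}, s) = \Orb(\gamma_0, f'_{1/2, n_0}, s)\cdot \Orb(\gamma_1, f'_{1/2, n_1}, s)$, while the corresponding decomposition $\Inv(\gamma;T) = \delta_0(T)\delta_1(T)$ is compatible with the product structure of $L_\delta$ in Definition \ref{ord_1/2}, giving the additivity $\ord_{1/2}(\delta) = \ord_{1/2}(\delta_0) + \ord_{1/2}(\delta_1)$.

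For the topologically nilpotent factor, I would apply Theorem \ref{thm:auxiliary}(1) to rewrite $\Orb(\gamma_0, f'_{1/2, n_0}, s) = \Orb(\wt\gamma_0, f'_0, s)$ with $\Inv(\wt\gamma_0;T) = \pi^{-n_0}\delta_0(\pi T)$, and then combine the vanishing bound \eqref{eq:vanishing_0} with identity \eqref{eq:rel_alg_orders} to conclude
$$\ord_{s=0}\Orb(\gamma_0, f'_{1/2, n_0}, s) \geq \ord_0(\Inv(\wt\gamma_0)) = \ord_{1/2}(\delta_0).$$

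For the unit factor, Lemma \ref{aux1lattice} identifies $\Orb(\gamma_1, f'_{1/2, n_1}, s)$ with the $s$-independent, strictly positive constant $\Orb(\wt x_1, 1_{GL_{n_1}(O_F)})$, so its vanishing order at $s = 0$ is $0$. The remaining task, and the main obstacle, is to show $\ord_{1/2}(\delta_1) = 0$. By \eqref{eq:rel_alg_orders} this reduces to $\ord_0(\wt\delta_1) = 0$ where $\wt\delta_1 = \mr{char}(\wt x_1;T)$ has only unit roots. I would unwind the definition of the algebras $B_i$ attached to each field factor $L_i$ of $L_{\wt\delta_1}$: using that $E/F$ is unramified, $E\otimes_F L_i$ is either split (whence $B_i$ is immediately matrix) or an unramified quadratic extension of $L_i$, in which case its norm subgroup is $\{a\in L_i^\times : v_{L_i}(a)\in 2\mbZ\}$ and contains $t_i = z^2$ because $v_{L_i}(t_i) = 0$. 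In either case $B_i$ is split, so $\ord_0(\wt\delta_1) = 0$. Combining the two factor estimates yields $\ord_{s=0}\Orb(\gamma, f'_{1/2}, s) \geq \ord_{1/2}(\delta_0) + \ord_{1/2}(\delta_1) = \ord_{1/2}(\Inv(\gamma))$, as required. Everything apart from the small norm computation proving $\ord_{1/2}(\delta_1)=0$ is just assembling reductions already established in \S\ref{ss:Gprime}.
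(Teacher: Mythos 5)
Your proposal is correct and follows essentially the same route as the paper: reduce via Corollary \ref{vanish_par} and the factorization of Corollary \ref{reduction} to the topologically nilpotent and integrally invertible cases, handle the former with Theorem \ref{thm:auxiliary}(1), \eqref{eq:rel_alg_orders} and \eqref{eq:vanishing_0}, and handle the latter by the unramified-norm argument showing $\ord_{1/2}(\delta_1)=0$. The only cosmetic difference is that you pin down the vanishing order of the unit factor exactly via Lemma \ref{aux1lattice}, whereas the paper only uses the trivial bound $\ord_{s=0}\Orb(\gamma_1, f'_{1/2}, s)\geq 0$ and mentions Lemma \ref{aux1lattice} as an aside.
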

\begin{proof}
If $z_γ^2/π$ is not integral over $O_F$, then $\Orb(γ, f'_{1/2}, s) = 0$ by Corollary \ref{vanish_par} and there is nothing to prove. So assume that $z^2_γ/π$ is integral and consider the orbital integral factorization from Corollary \ref{reduction},
$$\Orb(γ, f'_{1/2}, s) = \Orb(γ_0, f'_{1/2}, s)\Orb(γ_1, f'_{1/2}, s).$$
Since $\Inv(γ) = \Inv(γ_0)\Inv(γ_1)$, we have
$$\ord_{1/2}(\Inv(γ)) = \ord_{1/2}(\Inv(γ_0)) + \ord_{1/2}(\Inv(γ_1)).$$
It hence suffices to show \eqref{eq:vanishing} for $γ_0$ and $γ_1$ separately, meaning we may either assume $z_γ^2/π$ to be topologically nilpotent or integrally invertible.

If $z_γ^2/π$ is topologically nilpotent, then we can apply Theorem \ref{thm:auxiliary} (1): For any element $\wt{γ} \in G'_\rs$ with $\Inv(\wt{γ}; T) = π^{-n}\Inv(γ; πT)$, we have $\Orb(γ, f'_{1/2}, s) = \Orb(\wt{γ}, 1_{GL_{2n}(O_F)}, s)$. Using \eqref{eq:rel_alg_orders}, the desired vanishing statement \eqref{eq:vanishing} follows directly from \eqref{eq:vanishing_0}.

If $z_γ^2/π$ is integral over $O_F$ and $\det(z_γ^2/π) \in O_F^\times$, then
$$\ord_{1/2}(\Inv(γ; T)) = \ord_0(π^{-n}\Inv(γ; πT)) = 0$$
because $z^2_γ/π$ then lies in $O_{L_γ}^\times$ and is hence a norm from $E\tensor_F L_γ$. (This uses that $E/F$ is unramified.) The inequality $\ord_{s = 0} \Orb(γ, f'_{1/2}, s) \geq 0$ holds trivially and there is nothing to prove in this case. This completes the argument. (We remark that if $z_γ^2/π\in O_{L_γ}^\times$, then Corollary \ref{aux1lattice} states that $\Orb(γ, f'_{1/2}, s)$ is a constant independent of $s$.)
\end{proof}

\subsection{Proof of Theorem \ref{thm:FL}}
\label{ss:proof_FL}
We can finally deduce the fundamental lemma for $λ = 1/2$.

\begin{proof}[Proof of Theorem \ref{thm:FL}]
Let $\gamma \in G^\prime_{\rs}$. We need to show the identity
\begin{equation}
\label{*}\Orb(γ, f'_{1/2}) = \begin{cases} \Orb(g, f_{1/2}) & \text{if there exists a matching $g\in G_{1/2}$}\\
0 & \text{otherwise.}
\end{cases}
\end{equation}
We first note that if $π^{-n}\Inv(γ;πT) \notin O_F[T]$, then both sides of the equation vanish. This follows from Corollary \ref{vanish_par} and from Lemma \ref{vanish_b}, respectively. Hence we assume from now on that $π^{-n}\Inv(γ;πT) \in O_F[T]$ or, equivalently, that $z^2_\gamma/\pi$ is integral. Consider $L_δ$ and $B_δ$ as in Equation \eqref{eq:L_B} for $\delta = \Inv(γ;T)$. Let $L_δ=\prod_i L_i$ and $B_δ=\prod_i L_i$ be their factorizations according to idempotents. By \cite[Corollary 2.8]{LM2} there exists an element $g \in G_{1/2, \rs}$ that matches $\gamma$ if and only if $\ord_{1/2}(δ) = 0$, see Definition \ref{ord_1/2}. In particular, if there is no matching element $g \in G_{1/2, \rs}$, then $\Orb(γ, f'_{1/2}) = 0$ by Corollary \ref{cor:vanishing}. This proves the vanishing part of \eqref{*}.

We henceforth consider the case that there is an element $g \in G_{1/2, \rs}$ that matches $\gamma$. Assuming that $γ = 1 + z_γ$ and that $g = 1 + z_g$, let $γ = (γ_0, γ_1)$ and $g = (g_0, g_1)$ be the components of $γ$ and $g$ such that $z^2_{γ_0}/π$ and $z^2_{g_0}/π$ are topologically nilpotent, and $z_{γ_1}^2/π$ and $z_{g_1}^2/π$ integrally invertible (Corollaries \ref{aux1lattice} and \ref{aux2lattice}). Then $γ_0$ matches $g_0$ and $γ_1$ matches $g_1$ as can be seen by using the isomorphism $O_F[z^2_\gamma]\cong O_F[z^2_g]$ that sends $z^2_\gamma$ to $z^2_g$. By Lemmas \ref{reduction} and \ref{reduction_b}, the two sides of \eqref{*} factor and the desired equality becomes
$$\Orb(γ_0, f'_{1/2, n_0})\Orb(γ_1, f'_{1/2, n_1}) \overset{?}{=} \Orb(g_0, 1_{GL_{n_0}(O_B)}) \Orb(g_1, 1_{GL_{n_1}(O_B)}).$$
Here, $B = B_{1/2}$. We can prove this identity factor-by-factor, meaning we may assume that $z_γ^2/π$ is topologically nilpotent or that $z_γ^2/π \in O_{L_γ}^\times$.

Assume first that $z_γ^2/π$ is topologically nilpotent. Then we may apply Theorem \ref{thm:auxiliary} (1) and (2): Let $\wt {γ} \in G'_{\mr{rs}}$ and $\wt g \in G_{0, \rs}$ be such that $\Inv(\wt{γ}; T) = π^{-n}δ(πT) = \Inv(\wt g; T)$. We obtain
$$\Orb(γ, f'_{1/2}) = \Orb(\wt{γ}, 1_{GL_{2n}(O_F)}) = \Orb(\wt g, 1_{GL_{2n}(O_F)}) = \Orb(g, 1_{GL_n(O_B)})$$
where the middle equality is the Guo--Jacquet FL, i.e. the case $λ = 0$ of Theorem \ref{thm:FL}.

Assume now that $z_γ^2/π \in O_{L_γ}^\times$. Without loss of generality, we may assume that $γ = γ(x)$ for some $x\in GL_n(F)$ and $g = 1 + x'\varpi$ for some $x'\in GL_n(E)$. The fact that $γ$ and $g$ match translates to the identity
$$\mr{char}_F(x/π; T) = \mr{char}_E(x'σ(x'); T).$$
In other words, $x/π$ and $x'$ match in the sense of the quadratic base change FL, see \cite{K}. We obtain from Lemmas \ref{aux1lattice} and \ref{aux2lattice}, as well as the base change FL that
$$\Orb(γ, f'_{1/2}) = \Orb(x/π, 1_{GL_n(O_F)}) = \Orb^σ(x', 1_{GL_n(O_E)}) = \Orb(g, 1_{GL_n(O_B)}).$$
The proof of Theorem \ref{thm:FL} is now complete.
\end{proof}


\section{Arithmetic Fundamental Lemma}
\label{s:AFL}
It is left to prove Theorem \ref{thm:auxiliary} (3) and Theorem \ref{thm:main} which is the aim of this final section. Its structure is analogous to that of \S\ref{s:FL}: We first relate the intersection problems for $λ = 0$ and $λ = 1/2$ by analyzing their moduli descriptions (\S\ref{ss:moduli}). Then we prove a factorization result for $\mr{Int}(g)$ in the case of $λ = 1/2$ (\S\ref{ss:factorization}). Combining both techniques we obtain the proof of Theorem \ref{thm:main} (\S\ref{ss:proof_AFL}). Our notation in this section is the same as in \S\ref{ss:def_intersection}. 

\subsection{Description of $\mcI(g)$}
\label{ss:moduli}
Let $λ \in \{0,1/2\}$ and let $g \in G_{λ, \mbY, \mr{rs}}$ be a regular semi-simple element. Our first aim is to give a more explicit description of $\mcI(g)$. Let $B = B_λ$ be the quaternion algebra of invariant $λ$. Let $\varpi\in B^\times$ be the element from \eqref{uniformizer_of_quaternions}. Recall that this means that $\varpi$ is chosen such that $\varpi a = σ(a)\varpi$ for all $a\in E$ and such that $\varpi^2 = π^ε$ where $ε = 2λ$. Then $O_B = O_E\oplus \varpi O_E$. So for every strict $O_E$-module $\mbY$, we obtain the coordinates $\mbX = \mbY \oplus \varpi \mbY$. With respect to this decomposition, the $O_B$-action $κ:O_B\to \End(\mbX)$ is given by
$$a + b\varpi \longmapsto \begin{pmatrix}
a & b π^ε\\
σ(b) & σ(a)
\end{pmatrix},\quad a,b\in E.$$
The $O_B$-linear endomorphisms of $\mbX$ then have the presentation
\begin{equation}\label{eq:endo_ring_D_explicit}
D_{λ, \mbY} = \left.\left\{\left(\begin{matrix} x & π^εy\\ y & x\end{matrix}\right)\,\right\vert\, x \in C_\mbY,\ y\in \End^0_F(\mbY)\ \mr{s.th.}\ ya = σ(a)y\ \text{for}\ a\in E\right\}.
\end{equation}
Let $g = \begin{psmallmatrix} x & π^εy \\ y & x\end{psmallmatrix} \in G_{λ, \mbY} = D_{λ, \mbY}^\times$ be an element such that both $x$ and $y$ are invertible. Then $z_g$ takes the form
\begin{equation}\label{eq:z_g}
z_g = \begin{pmatrix}
 & π^εx^{-1}y \\ x^{-1}y & 
\end{pmatrix}.
\end{equation}
It follows from this that the invariant polynomial of $g$ is
$$\Inv(g;T) = \mr{charred}_{C_{\mbY}/E}\left(π^ε(x^{-1}y)^2; T\right).$$
\begin{defn}\label{def:Zz}
For an element $z\in \End^0_F(\mbX)$, we denote by $\mcZ(z)\subseteq \mcM_λ$ the closed formal subscheme with functor of points description
$$\mcZ(z)(S) = \{(X, κ, ρ) \in \mcM_λ(S) \mid ρzρ^{-1} \in \End(X)\}.$$
We analogously define $\mcZ(w)\subseteq \mcN$ for an endomorphism $w\in \End^0_F(\mbY)$.
\end{defn}
\begin{lem}\label{lem:Ig}
Let $g\in G_{λ, \mbY, \mr{rs}}$ be a regular semi-simple element such that $z_g$ is topologically nilpotent. Then, as closed formal subschemes of $\mcM_λ$,
$$\mcI(g) = \mcN \cap \mcZ(z_g).$$
Furthermore, writing $z_g = \left(\begin{smallmatrix} & π^εw \\ w & \end{smallmatrix} \right)$ as in \eqref{eq:z_g}, there is the following identity of closed formal subschemes of $\mcN$:
$$\mcN \cap \mcZ(z_g) = \mcZ(w).$$
\end{lem}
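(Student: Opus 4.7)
The plan is to unwind the functorial definitions of $\mcI(g)$ and $\mcZ(z_g)$ in terms of extension of quasi-endomorphisms, perform a direct calculation using the decomposition $g = g_+(1+z_g)$, and finally pass from $\mcM_\lambda$ down to $\mcN$ via the explicit block structure $\mbX = \mbY \oplus \varpi\mbY$ recalled at the start of this subsection.

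For the first identity, a test point $(X, \kappa, \rho)$ lies in $\mcI(g) = \mcN \cap g\cdot\mcN$ if and only if both $\iota(a)$ and $g^{-1}\iota(a)g$, viewed as quasi-endomorphisms of $\mbX$, transport under $\rho$ into $\End(X)$, for every $a \in O_E$. Writing $g = g_+(1+z_g)$, the factor $g_+$ commutes with $\iota$ and cancels in $g^{-1}\iota(a)g = (1+z_g)^{-1}\iota(a)(1+z_g)$. Exploiting the $E$-conjugate-linearity $z_g\iota(b) = \iota(\sigma(b))z_g$, a short calculation gives
\[
g^{-1}\iota(a)g = \iota(\sigma(a)) - (1+z_g)^{-1}\iota(\sigma(a)-a).
\]
Since $E/F$ is unramified, one may choose $a\in O_E$ so that $c := \sigma(a)-a$ is a unit in $O_E$; then $\iota(c)$ is a unit in $\End(X)$ whenever $(X, \kappa, \rho)\in \mcN$, and the extra condition on top of $\mcN$ becomes ``$(1+z_g)^{-1}$ transports into $\End(X)$''.

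The last step is to upgrade this to ``$z_g$ transports into $\End(X)$'', i.e., $(X, \kappa, \rho)\in \mcZ(z_g)$; topological nilpotency of $z_g$ is crucial here. If $z_g\in \End(X)$, then $1+z_g$ is a unit in $\End(X)$ (its image mod $\pi$ is $1$ plus a nilpotent), so $(1+z_g)^{-1}\in \End(X)$. Conversely, if $(1+z_g)^{-1}\in \End(X)$, then its reduced characteristic polynomial has coefficients in $O_F$ and reduces mod $\pi$ to $(T-1)^d$, because the eigenvalues of $1+z_g$ are $1$-units; hence $(1+z_g)^{-1}$ is itself a unit in $\End(X)$, so that $1+z_g\in \End(X)$ and therefore $z_g\in \End(X)$. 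This establishes $\mcI(g) = \mcN\cap \mcZ(z_g)$. For the second identity, a test point of $\mcN$ pushed forward to $\mcM_\lambda$ takes the form $X = Y\oplus \varpi Y$ with diagonal quasi-isogeny $\rho = \rho_Y\oplus \rho_Y$. Plugging in the block form $z_g = \left(\begin{smallmatrix} 0 & \pi^\varepsilon w \\ w & 0\end{smallmatrix}\right)$, a blockwise inspection shows that $\rho z_g\rho^{-1}$ lies in $\End(X)$ if and only if $\rho_Y w\rho_Y^{-1}$ lies in $\End(Y)$, which is precisely the defining condition of $(Y, \iota, \rho_Y)\in \mcZ(w)$.

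The main obstacle I anticipate is the careful bookkeeping for the topological-nilpotency step in the relative setting over a formal $O_{\breve F}$-scheme $S$, where ``$\End(X)$'' must be interpreted sheaf-theoretically and one must verify that the mod-$\pi$ reduction of $(1+z_g)^{-1}$ is indeed a unit in the appropriate quotient. It also helps to work with $g^{-1}$ rather than $g$: this makes $g_+$ cancel cleanly in $g^{-1}\iota(a)g$, producing $z_g$ directly rather than its conjugate $g_+z_g g_+^{-1}$, which would cause integrality issues because $g_+$ is only a quasi-isogeny in general.
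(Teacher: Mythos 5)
Your overall strategy is sound, but it is worth noting that the paper does not prove the first identity at all: it simply cites \cite[Proposition 4.20 (2)]{LM2}, whose hypothesis is exactly the topological nilpotency of $z_g$, and dismisses the second identity as immediate from the definitions of $\mcZ(z_g)$ and $\mcZ(w)$ (your blockwise computation with $\mbX = \mbY\oplus\varpi\mbY$ is precisely that). So for the second identity you match the paper, and for the first you are reconstructing the proof of the cited result. Your reconstruction has the right key ingredients: the cancellation of $g_+$, the identity $g^{-1}\iota(a)g = \iota(\sigma(a)) - (1+z_g)^{-1}\iota(\sigma(a)-a)$, the choice of $a$ with $\sigma(a)-a\in O_E^\times$ (valid since $E/F$ is unramified), and the equivalence $(1+z_g)^{-1}\in\End(X)\Leftrightarrow z_g\in\End(X)$ via topological nilpotency and Cayley--Hamilton. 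These steps are all correct.

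The one genuine soft spot is your opening assertion that $(X,\kappa,\rho)$ lies in $\mcN$ (resp.\ $g\cdot\mcN$) if and only if $\rho\iota(a)\rho^{-1}$ (resp.\ $\rho g^{-1}\iota(a)g\rho^{-1}$) is integral for all $a\in O_E$. Integrality of the transported $O_E$-action $\tilde\iota$ is necessary but not obviously sufficient: it yields a decomposition $X = X_+\times X_-$ according to whether $\kappa$ and $\tilde\iota$ agree or differ by $\sigma$, and membership in the image of \eqref{eq:Serre} additionally requires that $X_+$ be a \emph{strict} $O_E$-module and that $\kappa(\varpi)\colon X_+\to X_-$ be an isomorphism rather than a proper isogeny. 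Neither condition is formal (e.g.\ $\mbY$ and its $\sigma$-twist are $E$-linearly isogenous but only one is strict), so this is exactly the content one is borrowing from \cite[Propositions 4.14 and 4.20]{LM2}. The gap is repairable within your own framework: once $\rho z_g\rho^{-1}\in\End(X)$ is known, $v:=\rho(1+z_g)\rho^{-1}$ is a unit of $\End(X)$ commuting with $\kappa$ and conjugating $\tilde\iota$ into $\tilde\iota_g$, hence carries the decomposition for $\rho$ isomorphically and $\kappa$-equivariantly onto the one for $\rho g^{-1}$, transporting both extra conditions. You should either supply this step or cite \cite[Proposition 4.20]{LM2} for it, as the paper does.
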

\begin{proof}
The first identity is a special case of \cite[Proposition 4.23 (2)]{LM2} which we may apply because $z_g$ is topologically nilpotent by assumption. The second identity follows directly from the definitions of $\mcZ(z_g)$ and $\mcZ(w)$.
\end{proof}

\begin{cor}\label{cor:thm_3}
Part (3) of Theorem \ref{thm:auxiliary} holds.
\end{cor}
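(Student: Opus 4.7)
The plan parallels the approach used for parts (1) and (2) of Theorem \ref{thm:auxiliary}: exhibit $\wt g$ in explicit matrix coordinates via \eqref{eq:endo_ring_D_explicit}, and then use Lemma \ref{lem:Ig} to identify $\mcI(g)$ with $\mcI(\wt g)$ directly inside $\mcN$. Concretely, I would write $g = \begin{psmallmatrix} x & \pi y \\ y & x \end{psmallmatrix}$ with $x \in C_\mbY$ and $y$ a $\sigma$-semilinear quasi-endomorphism of $\mbY$; regular semi-simplicity of $g$ forces both $x$ and $y$ to be invertible (as $g_+$ and $g_-$ must be invertible). The candidate for $\wt g$ is then $\begin{psmallmatrix} x & y \\ y & x \end{psmallmatrix} \in D_{0,\mbY}$, obtained by keeping the coordinate data $(x,y)$ but replacing the presentation with $\varepsilon = 0$.

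The next step would verify that the invariants match as required. Setting $w = x^{-1}y$, formula \eqref{eq:z_g} gives $z_g = \begin{psmallmatrix} 0 & \pi w \\ w & 0 \end{psmallmatrix}$ and $z_{\wt g} = \begin{psmallmatrix} 0 & w \\ w & 0 \end{psmallmatrix}$, so squaring yields $z_g^2 = \pi w^2$ and $z_{\wt g}^2 = w^2$ viewed through the diagonal embedding $C_\mbY \hookrightarrow D_{\lambda,\mbY}$. It follows that $\Inv(g;T) = \mr{charred}_{C_\mbY/E}(\pi w^2;T)$ and $\Inv(\wt g;T) = \mr{charred}_{C_\mbY/E}(w^2;T) = \pi^{-n}\Inv(g;\pi T)$. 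The hypothesis on $g$ translates into $w^2$ being topologically nilpotent, which, combined with separability of $\Inv(g;T)$, shows that $\wt g \in G_{0,\mbY,\rs}$ and that both $z_g$ and $z_{\wt g}$ are topologically nilpotent.

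With these preparations in place, Lemma \ref{lem:Ig} applies to both $g$ and $\wt g$ and gives
$\mcI(g) = \mcN \cap \mcZ(z_g) = \mcZ(w)$ and $\mcI(\wt g) = \mcN \cap \mcZ(z_{\wt g}) = \mcZ(w)$, with the same $w \in \End^0_F(\mbY)$ in both expressions. Thus $\mcI(g)$ and $\mcI(\wt g)$ coincide as closed formal subschemes of $\mcN$. To conclude the equality of intersection numbers, I would observe that $L_g = F[\pi w^2] = F[w^2] = L_{\wt g}$ as subalgebras of $C_\mbY$, the second equality holding because $\pi^{-1} \in F$. Consequently, the Serre tensor maps $L_g^\times = L_{\wt g}^\times$ into $H_\mbY$ by the same inclusion, the same discrete lattice $\Gamma$ may be used in Definition \ref{def:intersection} for both elements, and the artinian quotients $\Gamma \backslash \mcI(g) = \Gamma \backslash \mcI(\wt g)$ are identical. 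Therefore $\mr{Int}(g) = \mr{Int}(\wt g)$.

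I do not anticipate a serious obstacle: once the matrix coordinates are fixed, Lemma \ref{lem:Ig} absorbs all the geometric content, and everything reduces to the coordinate translation $(x, \pi y) \leftrightsquigarrow (x, y)$. The only bookkeeping point requiring care is tracking the $\sigma$-semilinearity of $y$ and $w$ when squaring $z_g$, so that one arrives at the diagonal $E$-linear element $\pi^\varepsilon w^2 \in C_\mbY$ rather than at an element that only formally looks diagonal.
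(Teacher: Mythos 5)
Your proposal is correct and follows essentially the same route as the paper: define $\wt g$ by replacing the off-diagonal entry $\pi y$ with $y$, check the invariant identity and regular semi-simplicity, apply Lemma \ref{lem:Ig} twice to identify $\mcI(g)$ and $\mcI(\wt g)$ with the same $\mcZ(w)\subset\mcN$, and use $L_g=L_{\wt g}$ to choose a common $\Gamma$. The only cosmetic difference is that the paper works with $\mcZ(y)$ rather than $\mcZ(w)$ and notes explicitly that $\Inv(g;\pi)\neq 0$ guarantees $\wt g$ is invertible, a point your argument covers via the topological nilpotence of $z_{\wt g}$.
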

\begin{proof}
Let $g\in G_{1/2, \mbY, \mr{rs}}$ be an element whose invariant has the property that $π^{-n}\Inv(g; πT)$ lies in $T^n + πO_F[T]$. We need to construct an element $\wt g \in G_{0, \mbY, \mr{rs}}$ such that both $\Inv(\wt g, T) = π^{-n}\Inv(g, πT)$ and $\mr{Int}(g) = \mr{Int}(\wt g)$.

Given $g$, we define $\wt g \in D_{0, \mbY}$ by the following relation:
$$g = \begin{pmatrix}
x & πy \\ y & x
\end{pmatrix}\quand \ \wt g := \begin{pmatrix}
x & y \\ y & x
\end{pmatrix}.$$
Since $g$ is regular semi-simple, $g_+ = \left(\begin{smallmatrix} x & \\ & x\end{smallmatrix} \right)$ and $g_- = \left(\begin{smallmatrix} & πy \\ y & \end{smallmatrix} \right)$ are both invertible. Then $\wt g_+ = g_+$ and $\wt g_- = \left(\begin{smallmatrix} & y \\ y & \end{smallmatrix} \right)$ are invertible as well. The assumption on $\Inv(g;T)$ implies that $\Inv(g;π) \neq 0$, so $\wt g$ lies in $G_{0,\mbY}$ and has invariant polynomial
$$\Inv(\wt g; T) = π^{-n}\Inv(g; πT).$$
This polynomial is separable by assumption on $g$, so $\wt g$ is regular semi-simple. Moreover, $z_{\wt g}^2$ is topologically nilpotent since $\Inv(\wt g; T) \equiv T^n$ modulo $πO_F[T]$. We may now apply Lemma \ref{lem:Ig} twice to see that
\begin{equation}\label{eq:ident_intersection}
\begin{aligned}
\mcI(\wt g) & = \mcN \cap_{\mcM_0} \mcZ\left(\left(\begin{smallmatrix}  & y \\ y & \end{smallmatrix}\right)\right)\\
& = \mcZ(y)\\
& = \mcN \cap_{\mcM_{1/2}} \mcZ\left(\left(\begin{smallmatrix}  & πy \\ y & \end{smallmatrix}\right)\right)\\
& = \mcI(g).
\end{aligned}
\end{equation}
Note that $z_g^2 = πz_{\wt g}^2$, so the two $F$-algebras $L_g = F[z_g^2]$ and $L_{\wt g} = F[z_{\wt g}^2]$ agree as subalgebras of $C_\mbY$. The isomorphism in \eqref{eq:ident_intersection} is then equivariant with respect to the action of $L_g^\times = L_{\wt g}^\times$. Choosing the same subgroup $Γ\subset L_g^\times = L_{\wt g}^\times$ in the definition of the intersection number (Definition \ref{def:intersection}), we obtain $Γ\backslash \mcI(g) = Γ\backslash \mcI(\wt g)$ and hence the identity
$$\mr{Int}(g) = \mr{Int}(\wt g)$$
as was to be shown.
\end{proof}

\subsection{Factorization of $\mr{Int}(g)$}
\label{ss:factorization}
Recall that $\mbY = \mbY^\circ\times \mbY^\et$ and $\mbX = \mbX^\circ\times \mbX^\et$ denote the connected-étale decompositions of $\mbY$ and $\mbX$. We define RZ spaces $\mcN^\circ$, $\mcN^\et$, $\mcM_λ^\circ$ and $\mcM_λ^\et$ in complete analogy to our definitions of $\mcN$ and $\mcM$ in \eqref{eq:def_LT_space}, \eqref{eq:def_mysterious_space}, but using the objects $\mbY^\circ$, $\mbY^\et$, $\mbX^\circ$ and $\mbX^\et$ instead of $\mbY$ and $\mbX$. For $g = (g^\circ, g^\et)\in G_{λ, \mbY, \mr{rs}}$, we can then also define
$$\mcI(g^\circ) := \mcN^\circ \cap g^\circ (\mcN^\circ) \quand \mcI(g^\et) := \mcN^\et \cap g^\et (\mcN^\et)$$
where the intersections happen on $\mcM^\circ_λ$ and $\mcM^\et_λ$. 
\begin{lem}\label{lem:I_factors}
The set $\mcI(g)(\mbF)$ has the product structure
\begin{equation}\label{eq:prod_Ig}
\mcI(g)(\mbF) = \mcI(g^\circ)(\mbF) \times \mcI(g^\et)(\mbF).
\end{equation}
\end{lem}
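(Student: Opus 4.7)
The plan is to reduce the statement to the observation that all the relevant moduli spaces split on $\mbF$-points according to the connected-étale decomposition, and that this splitting is compatible with both the Serre tensor embedding and the group action.

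First, I would verify that $\mcN(\mbF) = \mcN^\circ(\mbF)\times \mcN^\et(\mbF)$ and $\mcM_λ(\mbF) = \mcM_λ^\circ(\mbF)\times \mcM_λ^\et(\mbF)$. For an $\mbF$-point $(Y,ι,ρ)\in \mcN(\mbF)$, the module $Y$ has its own connected-étale decomposition $Y = Y^\circ\times Y^\et$ compatible with $ι$. Since there are no nontrivial homomorphisms between connected and étale $π$-divisible groups over $\mbF$, the quasi-isogeny $ρ\colon \mbY\to Y$ necessarily splits as $ρ = ρ^\circ\times ρ^\et$; conversely, one recovers an $\mbF$-point of $\mcN$ from such a pair by direct product. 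The same reasoning applies verbatim to $\mcM_λ$: the connected-étale decomposition of a special $O_B$-module is automatically $O_B$-stable (connectedness is preserved by any endomorphism), specialness only constrains the connected factor since $\Lie(X^\et)=0$, and any quasi-isogeny from $\mbX$ splits in the same way.

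Next, I would check that the Serre tensor map $\mcN\to \mcM_λ$ from \eqref{eq:Serre} respects the decomposition. This is immediate because, as $O_E$-module, $O_B\tensor_{O_E}Y \iso Y\oplus \varpi Y$, so $O_B\tensor_{O_E}Y^\circ$ is connected and $O_B\tensor_{O_E}Y^\et$ is étale. Hence under the product identifications above, the inclusion $\mcN(\mbF)\hookrightarrow \mcM_λ(\mbF)$ becomes the product of the inclusions $\mcN^\circ(\mbF)\hookrightarrow \mcM_λ^\circ(\mbF)$ and $\mcN^\et(\mbF)\hookrightarrow \mcM_λ^\et(\mbF)$. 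The action of $G_{λ,\mbY}$ decomposes analogously via \eqref{eq:decomp_C_D}: $g=(g^\circ,g^\et)$ acts componentwise on $\mcM_λ^\circ(\mbF)\times\mcM_λ^\et(\mbF)$.

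Combining these, an $\mbF$-point $(p^\circ,p^\et)\in \mcN(\mbF)$ lies in $g\cdot \mcN(\mbF)$ if and only if $p^\circ\in g^\circ\cdot\mcN^\circ(\mbF)$ and $p^\et\in g^\et\cdot \mcN^\et(\mbF)$, which gives the desired product formula \eqref{eq:prod_Ig}. I do not expect any real obstacle here; the only point requiring mild care is verifying that the connected-étale decomposition is preserved by all the structures in sight (the $O_B$-action, the quasi-isogenies, the Serre tensor), but each of these follows from the fact that there are no nontrivial homomorphisms between connected and étale $π$-divisible groups over $\mbF$.
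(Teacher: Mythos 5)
Your argument is correct and is essentially the paper's proof spelled out in more detail: the paper likewise invokes the unique, fully functorial connected–étale splitting of strict $O_F$-modules over $\mbF$ and deduces the product structure directly from the definitions. The points you elaborate (splitting of quasi-isogenies, compatibility with the Serre tensor map and the $G_{λ,\mbY}$-action) are exactly what the paper summarizes as ``functorial in all respects.''
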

\begin{proof}
Every strict $O_F$-module over $\mbF$ is, in a unique way, the product of its identity connected component and its maximal étale quotient. This decomposition is functorial in all respects, giving \eqref{eq:prod_Ig} from definitions.
\end{proof}

\begin{lem}\label{lem:integrality_y}
Let $g\in G_{1/2, \mbY, \mr{rs}}$ be regular semi-simple and such that $\mcI(g)\neq \emptyset$. Then $z_g^2/π$ is integral over $O_F$.
\end{lem}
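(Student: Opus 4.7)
The plan is to produce an $\mbF$-valued point of $\mcI(g)$ and translate the condition that both $(X,\kappa,\rho)$ and $(X,\kappa,\rho g)$ lie in $\mcN$ into a lattice-theoretic statement, from which integrality of $z_g^2/\pi$ will follow by Nakayama's lemma together with the matrix calculation provided by \eqref{eq:endo_ring_D_explicit}.

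First, since $\mcI(g)$ is a nonempty closed formal subscheme of $\mcM_{1/2}$, it admits an $\mbF$-valued point $(X,\kappa,\rho)$, and by definition both $(X,\kappa,\rho)$ and $(X,\kappa,\rho g)$ lie in $\mcN(\mbF)$. I would pass to the Dieudonné lattice picture: with $W := \mbD(\mbY)[1/\pi]$ and $V := \mbD(\mbX)[1/\pi] = W\oplus\varpi W$, write $M\subset V$ for the lattice attached to $(X,\kappa,\rho)$, so that the one attached to $(X,\kappa,\rho g)$ is $gM$. The Serre-tensor condition defining $\mcN\hookrightarrow\mcM_{1/2}$ then reads: $M = N\oplus\varpi N$ and $gM = N'\oplus\varpi N'$ for some $O_E$-stable lattices $N,N'\subset W$.

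Using the explicit form $g = \begin{pmatrix} x & \pi y\\ y & x\end{pmatrix}$ from \eqref{eq:endo_ring_D_explicit}, a direct calculation gives $g(v_1+\varpi v_2) = (xv_1+\pi yv_2)+\varpi(yv_1+xv_2)$. Projecting the identity $gM = N'\oplus\varpi N'$ onto the two summands of $V$ then yields $xN+\pi yN = N' = xN+yN$, and therefore $yN\subseteq xN+\pi yN$. Setting $w := x^{-1}y\in\End^0_F(\mbY)$, this inclusion rewrites as $wN\subseteq N+\pi wN$. I introduce $N_0 := \{v\in N \mid wv\in N\}$; the inclusion above says precisely $N = N_0+\pi N$. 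Since $N$ is finitely generated over $O_F$ and $\pi$ lies in the Jacobson radical of $O_F$, Nakayama's lemma forces $N_0 = N$, i.e.\ $wN\subseteq N$. Hence $w^2\in C_\mbY$ preserves $N$ and is integral over $O_F$. Since the calculation from \eqref{eq:z_g} gives $z_g^2 = \pi\, w^2\cdot I$ in $D_{1/2,\mbY}$, we conclude that $z_g^2/\pi = w^2\cdot I$ is integral over $O_F$, as required.

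The main obstacle is the first step: justifying that the Serre-tensor condition cutting out $\mcN\subset\mcM_{1/2}$ corresponds exactly to $M$ having the symmetric form $N\oplus\varpi N$ (with the same $O_E$-lattice $N$ on both summands of $V$), rather than the strictly weaker form $N_1\oplus\varpi N_2$ with $\pi N_2\subseteq N_1\subseteq N_2$ which is implied by mere $O_B$-stability of $M$. This requires the explicit moduli description underlying the closed immersion $\mcN\hookrightarrow\mcM_{1/2}$ together with compatibility of the Serre tensor construction with the lattice description of strict $O_E$-modules; the subsequent projection computation and Nakayama argument are then essentially mechanical.
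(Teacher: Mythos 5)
Your argument is correct, but it takes a genuinely different route from the paper. The paper first splits $\mcI(g)(\mbF)$ into its connected and \'etale factors (Lemma \ref{lem:I_factors}), treats the connected part by invoking \cite[Proposition 4.20 (1)]{LM2} to get topological nilpotence of $z_g^\circ$ and then Lemma \ref{lem:Ig} to identify $\mcI(g^\circ)$ with $\mcZ(w^\circ)$, and treats the \'etale part separately via Tate modules and Lemma \ref{vanish_b}. You instead give a single uniform Dieudonn\'e-lattice computation: from a point of $\mcI(g)(\mbF)$ you extract the inclusion $yN \subseteq xN + \pi yN$, hence $N = N_0 + \pi N$ with $N_0 = \{v \in N \mid wv \in N\}$, and Nakayama over the complete local ring $O_{\breve F}$ forces $wN \subseteq N$, whence $z_g^2/\pi = w^2$ is integral. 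This avoids both the connected--\'etale case split and any a priori appeal to topological nilpotence of $z_g$; in effect you reprove the relevant direction of Lemma \ref{lem:Ig} directly in the lattice picture. The step you flag as the main obstacle --- that an $\mbF$-point in the image of $\mcN \hookrightarrow \mcM_{1/2}$ has Dieudonn\'e lattice of the symmetric form $N \oplus \varpi N$ rather than merely an $O_B$-stable one --- is not a gap: it is exactly what the Serre tensor description \eqref{eq:Serre} gives, since the Dieudonn\'e module of $O_B \tensor_{O_E} Y$ is $O_B \tensor_{O_E} \mbD(Y) = \mbD(Y) \oplus \varpi\,\mbD(Y)$, compatibly with $\rho$. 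Two cosmetic points: $N$ is finitely generated over $O_{\breve F}$ (not over $O_F$), which is what Nakayama needs; and the invertibility of $x$ and $y$ used to form $w = x^{-1}y$ should be noted as a consequence of $g$ being regular semi-simple ($g_+, g_-$ invertible). The paper's route has the advantage of reusing already-established lemmas (and the \'etale half feeds directly into the orbital-integral comparison later), while yours is more self-contained and arguably cleaner.
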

\begin{proof}
By the product structure on $\mcI(g)(\mbF)$ from Lemma \ref{lem:I_factors}, $\mcI(g)$ being non-empty implies that both $\mcI(g^\circ)$ and $\mcI(g^\et)$ are non-empty. Since $\mbY^\circ$ has no étale factor by definition, \cite[Proposition 4.23 (1)]{LM2} applies and states that $z_g^\circ$ is topologically nilpotent. Then $\mcI(g^\circ) = \mcN^\circ \cap \mcZ(z_g^\circ)$ by Lemma \ref{lem:Ig}. Writing $g = \left(\begin{smallmatrix} x & πy \\ y & x\end{smallmatrix} \right)$ as in \eqref{eq:endo_ring_D_explicit} (here we used that we consider the case $λ = 1/2$), we obtain from
\begin{equation}\label{eq:z_g_general}
z_g^\circ =  \begin{pmatrix}
 & πw^\circ \\ w^\circ & 
\end{pmatrix},\quad\ \ w = x^{-1}y,
\end{equation}
that $\mcI(g^\circ) = \mcZ(w^\circ)$ where the right hand side is a closed formal subscheme of $\mcN$. Thus $\mcI(g^\circ) \neq \emptyset$ implies that $w^\circ$ is integral. Since $(z_g^\circ)^2/π = (w^\circ)^2$, this shows that $(z_g^\circ)^2/π$ is integral as claimed.

We are left to prove that $(z_g^\et)^2/π$ is integral. Passing from $\mbY^\et$ and $\mbX^\et$ to Tate modules, we may identify $\mcI(g^\et)(\mbF)$ with the set $\mcL(\varpi^{-1} z_g^\et)$ from Definition \ref{L_b}. By Lemma \ref{vanish_b}, this set being non-empty implies $(z_g^\et)^2/π$ integral. The proof is now complete.
\end{proof}

\begin{lem}\label{lem:top_nilp}
Let $g^\circ\in G^\circ_{1/2, \mbY, \rs}$ be a regular semi-simple element such that $(z^\circ_g)^2/π$ is integral. Then $(z_g^\circ)^2/π$ is even topologically nilpotent.
\end{lem}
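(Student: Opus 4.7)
The plan is to analyze the ambient central division algebra and exploit a parity argument on valuations.

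First I would identify the relevant algebraic structure. Since $\mbY^\circ$ is a one-dimensional connected formal $O_F$-module of $O_F$-height $2n^\circ$ over the algebraically closed field $\mbF$, standard Dieudonn\'e theory gives that $D := \End^0_F(\mbY^\circ)$ is the central division algebra over $F$ of Hasse invariant $1/(2n^\circ)$, containing $E$ with centralizer $\mr{Cent}_E(D) = C_\mbY^\circ$ (recovering the CDA of invariant $1/n^\circ$ noted in \S\ref{ss:def_intersection}). Fix a uniformizer $\Pi$ of $O_D$. Since $\Pi$-conjugation acts as Frobenius on the maximal unramified subfield of $D$, it restricts to $\sigma$ on $E$; hence $\Pi$ itself is $\sigma$-conjugate-linear in the sense of \eqref{eq:endo_ring_D_explicit}, and $D = C_\mbY^\circ \oplus C_\mbY^\circ\,\Pi$ decomposes into $E$-linear and $\sigma$-conjugate-linear summands.

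Next I would unpack the element. Writing $g^\circ = \left(\begin{smallmatrix} x & \pi y \\ y & x\end{smallmatrix}\right)$ as in \eqref{eq:endo_ring_D_explicit} and using \eqref{eq:z_g_general}, the quantity to study is $w^2 = (z_g^\circ)^2/\pi \in C_\mbY^\circ$, where $w := x^{-1}y \in C_\mbY^\circ\cdot \Pi$. Write $w = c\,\Pi$ with $c \in C_\mbY^\circ$. Normalize the valuations $v_D$ and $v_C$ so that uniformizers have valuation $1$. Comparing the ramification indices $2n^\circ$ and $n^\circ$ of $D/F$ and $C_\mbY^\circ/F$ yields $v_D|_{C_\mbY^\circ} = 2\,v_C$; in particular $\Pi^2 \in C_\mbY^\circ$ (it commutes with $E$ since $\sigma^2 = \id$) is a uniformizer of $C_\mbY^\circ$. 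Consequently $v_C(w^2) = 2\,v_C(c) + 1$, an odd integer.

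To conclude: since $C_\mbY^\circ$ is a CDA, integrality of $w^2$ over $O_F$ is equivalent to $v_C(w^2) \geq 0$; parity then forces $v_C(w^2) \geq 1$, which is the asserted topological nilpotence of $w^2 = (z_g^\circ)^2/\pi$. The argument is essentially valuation bookkeeping and I expect no serious obstacle; the conceptual point worth emphasizing is that the extra factor of $\pi$ appearing in the off-diagonal of $g^\circ$ (precisely what distinguishes Hasse invariant $1/2$ from $0$) produces the odd shift in $v_C(w^2)$ and thereby rules out the marginal case where $w^2$ could be a non-nilpotent unit.
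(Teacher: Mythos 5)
Your proof is correct, and it takes a genuinely different route from the paper's. The paper argues through the invariant: it first shows that $L_g^\circ$ must be a field (using that it embeds into $D^\circ_{1/2,\mbY}$, which is either a division algebra or $M_2$ of one), then splits into the two cases $n^\circ$ even/odd, determines in each case whether the quaternion algebra $B_g^\circ$ over $L_g^\circ$ is split or division, translates this into $(z_g^\circ)^2$ being or not being a norm from $E\tensor_F L_g^\circ$ (i.e.\ a parity condition on $v_{L_g^\circ}((z_g^\circ)^2)$), and finally combines this with the parity of the ramification index of $L_g^\circ/F$ to conclude that $v((z_g^\circ)^2/\pi)$ is odd. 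You instead work directly in $\End^0_F(\mbY^\circ)$, the division algebra of invariant $1/(2n^\circ)$, and get the same parity obstruction in one stroke: the conjugate-linear part is $C_\mbY^\circ\,\Pi$ for a uniformizer $\Pi$ with $v_D|_{C_\mbY^\circ}=2v_C$, so $v_C(w^2)=2v_C(c)+1$ is odd, and integrality then forces positivity. Your version is uniform in $n^\circ$ (no case distinction), more elementary, and isolates the conceptual point --- the odd $v_D$-valuation of conjugate-linear elements of a division algebra --- more cleanly; the paper's version has the mild advantage of being phrased entirely in terms of the data $(L_g^\circ, B_g^\circ)$ attached to the invariant polynomial, which is the language used elsewhere (e.g.\ in Definition \ref{ord_1/2} and the vanishing arguments). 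All the intermediate claims you make ($\End^0_F(\mbY^\circ)\iso D_{1/(2n^\circ)}$, the $\sigma$-conjugate-linearity of $\Pi$, $\Pi^2$ a uniformizer of $C_\mbY^\circ$, integrality $\Leftrightarrow v_C\geq 0$ in a division algebra) are standard and correctly justified.
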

\begin{proof}
We consider the two cases $n^\circ$ even or odd separately. Assume first that $n^\circ$ is even. Then $D_{1/2, \mbY}^\circ$ is a CDA over $F$ of degree $2n^\circ$ and Hasse invariant $(n^\circ + 1)/2n^\circ$. Thus $L_g^\circ$ is a field extension of degree $n^\circ$ of $F$. As $E\tensor_F L_g^\circ$ embeds into $D_{1/2, \mbY}^\circ$, this tensor product has to be a field, so the inertia degree of $L_g$ over $F$ is odd. This implies that its ramification index is even. Moreover, $B_g^\circ$ embeds into $D_{1/2, \mbY}^\circ$ and is hence a division algebra. It follows that $(z_g^\circ)^2\in (L_g^\circ)^\times$ is not a norm from $E\tensor_F L_g^\circ$ and hence has odd valuation. Since the ramification index is even, $(z_g^\circ)^2/π$ has odd valuation as well. So $(z_g^\circ)^2/π\in O_{L_g^\circ}^\times$ is impossible as was to be shown.

Now we consider the case that $n^\circ$ is odd. Then $D_{1/2, \mbY} \iso M_2(Q)$ where $Q$ is a CDA over $F$ of degree $n^\circ$ and Hasse invariant $(n^\circ + 1)/2n^\circ$. The étale $F$-algebra $L_g^\circ$, which has degree $n^\circ$, again has to be a field because there cannot exist an embedding $L_g\to M_2(Q)$ otherwise. (This argument used that $n^\circ$ is odd.) Then one obtains that $B_g^\circ \iso M_2(L_g^\circ)$ because it equals the centralizer of $L_g^\circ$ in $M_2(Q)$. This means that $(z_g^\circ)^2\in L_g^\circ$ is a norm from $E\tensor_FL_g^\circ$ which is equivalent to $(z_g^\circ)^2$ having even valuation. The degree $[L_g^\circ:F]$ is odd, so it follows that $(z_g^\circ)^2/π$ has odd valuation and thus cannot lie in $O_{L_g^\circ}^\times$ as was to be shown.
\end{proof}

\begin{construction}\label{constr:factors}
Assume that $g = (g^\circ, g^\et) \in G_{1/2, \mbY, \mr{rs}}$ satisfies that $z_g^2/π$ is integral. By Lemma \ref{lem:top_nilp}, $(z_g^\circ)^2/π$ is even topologically nilpotent. Let $R = O_F[z_g^2/π] \subset C_\mbY$ be the $O_F$-algebra generated by $z_g^2/π$. Using Hensel's Lemma as in Lemma \ref{lemm: hensel}, there is a unique factorization $R = R_0 \times R_1$ such that, when writing $z_g^2/π = (ζ_0, ζ_1)$, the component $ζ_0$ is topologically nilpotent and $ζ_1\in R_1^\times$. Since $(z^\circ_g)^2/π$ is topologically nilpotent, the projection $R\to R_1$ factors through the projection map
$$F[z_g^2/π] = F[(z_g^\circ)^2/π]\times F[(z_g^\et)^2/π] \lr F[(z_g^\et)^2/π].$$
Let $\mbY = \mbY_0\times \mbY_1$ be the decomposition of $\mbY$ up to isogeny with respect to the idempotents defining $R = R_0\times R_1$. By what was just said, $\mbY_1$ is an étale $π$-divisible $O_E$-module. The centralizer in $G_{1/2, \mbY, \mr{rs}}$ of the idempotent $(1,0)\in R$ is thus of the form $J_0 \times J_1$ with
$$J_0 = G_{1/2, \mbY_0}\quand J_1 = G_{1/2, \mbY_1} \iso GL_{n_1}(B).$$
Here, $n_1$ is the $O_E$-height of $\mbY_1$. Let $(g_0, g_1)\in J_{0, \mr{rs}} \times J_{1, \mr{rs}}$ be a pair of regular semi-simple elements such that $H_\mbY (g_0, g_1)H_\mbY = H_\mbY gH_\mbY$. Such a pair exists: For example, after an $H_\mbY$-translation, we may assume that $g = 1 + z_g$ in which case $g$ commutes with $z_g$. Then $g$ itself lies in $J_0 \times J_1$.
\end{construction}

\begin{prop}\label{prop:factorization_geometric}
Assume $g\in G_{1/2, \mbY, \mr{rs}}$ is such that $z_g^2/π$ is integral over $O_F$. Let $(g_0, g_1) \in J_{0, \mr{rs}}\times J_{1, \mr{rs}}$ be as in Construction \ref{constr:factors}. Then the intersection number of $g$ factors as
\begin{equation}\label{eq:factorization_geometric}
\mr{Int}(g) = \mr{Int}(g_0) \Orb(g_1, 1_{GL_{n_1}(O_B)}).
\end{equation}
\end{prop}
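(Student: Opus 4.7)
My plan is to transport the idempotent decomposition of Construction \ref{constr:factors} to the RZ moduli and exploit that the factor of $\mr{Int}(g)$ coming from $\mbY_1$ is étale, so it reduces to a lattice count that matches $\Orb(g_1, 1_{GL_{n_1}(O_B)})$. First, I would use that the orthogonal idempotents $e_0 = (1,0), e_1 = (0,1) \in R_0\times R_1 \subseteq C_\mbY = \End^0_E(\mbY)$ induce an isogeny decomposition $\mbY = \mbY_0 \times \mbY_1$ and, via Serre tensor, $\mbX = \mbX_0 \times \mbX_1$. By rigidity of idempotents under lifting, this decomposition extends uniquely to any framed deformation, giving RZ decompositions
$$\mcN \iso \mcN_0 \times_{\Spf O_{\breve F}} \mcN_1, \quad \mcM_{1/2} \iso \mcM_{1/2,0} \times_{\Spf O_{\breve F}} \mcM_{1/2,1}, $$
compatible with the Serre closed immersion. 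Since $g = (g_0, g_1) \in J_0 \times J_1$ acts factor by factor, this yields $\mcI(g) = \mcI(g_0) \times_{\Spf O_{\breve F}} \mcI(g_1)$. Choosing $\Gamma = \Gamma_0 \times \Gamma_1$ compatibly with the splitting $L_g^\times = L_{g_0}^\times \times L_{g_1}^\times$, the same factorization descends to the $\Gamma$-quotient.

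Second, $\mbY_1$ is purely étale: on the $R_1$-component, $z_g^2/\pi$ is integrally invertible, but $(z_g^\circ)^2/\pi$ is topologically nilpotent on the connected factor by Lemma \ref{lem:top_nilp}, so the connected part of $\mbY$ contributes trivially to $\mbY_1$. Consequently $\mcN_1$ and $\mcM_{1/2, 1}$ are étale over $\Spf O_{\breve F}$, and so is $\mcI(g_1)$. Every local ring of $\Gamma_1 \backslash \mcI(g_1)$ is therefore just $O_{\breve F}$, and the length of the product factors as a point count times the length on the $g_0$-side:
$$\mr{Int}(g) = \mr{len}_{O_{\breve F}}(\mcO_{\Gamma \backslash \mcI(g)}) = |(\Gamma_1 \backslash \mcI(g_1))(\mbF)|\cdot \mr{Int}(g_0).$$

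Third, I would identify $|(\Gamma_1 \backslash \mcI(g_1))(\mbF)|$ with $\Orb(g_1, 1_{GL_{n_1}(O_B)})$. Via the rational Tate module, $\mcN_1(\mbF)$ identifies with the set of $O_E$-lattices in $E^{n_1}$. After an $H_\mbY$-translation we may write $g_1 = 1 + x_1\varpi$ with $x_1\sigma(x_1)$ integrally invertible, and the intersection condition $g_1(O_B \tensor_{O_E}\Lambda) = O_B\tensor_{O_E}\Lambda'$ translates to $x_1\sigma(\Lambda) = \Lambda$. The $L_{g_1}^\times$-weighted count of such lattices is precisely the twisted orbital integral $\Orb^\sigma(x_1, 1_{GL_{n_1}(O_E)})$, which by Lemma \ref{aux2lattice} equals $\Orb(g_1, 1_{GL_{n_1}(O_B)})$.

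The main technical obstacle will be step one: the isogeny splitting $\mbY = \mbY_0 \times \mbY_1$ is only defined up to isogeny, so one has to verify that $e_0, e_1$ lift uniquely to endomorphisms of any framed deformation and that the RZ moduli problems decompose cleanly along them. This is a rigidity statement I expect to follow from the general fact that quasi-endomorphisms commuting with the framing quasi-isogeny extend to genuine endomorphisms of any deformation of a strict $\pi$-divisible $O_E$-module (or $O_B$-module, for the $\mcM$-part), together with the identification of the universal deformation of a product with the product of universal deformations.
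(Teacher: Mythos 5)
Your steps two and three coincide with the paper's argument, but step one contains a genuine gap. The claimed product decompositions $\mcN\iso\mcN_0\times_{\Spf O_{\breve F}}\mcN_1$ and $\mcM_{1/2}\iso\mcM_{1/2,0}\times_{\Spf O_{\breve F}}\mcM_{1/2,1}$ are false, and the rigidity statement you hope will supply them --- that a quasi-endomorphism of the framing object commuting with $\rho$ extends to a \emph{genuine} endomorphism of every framed deformation --- is also false: if it were true, every special cycle $\mcZ(z)$ of Definition \ref{def:Zz} would be all of $\mcM_\lambda$, whereas the whole point is that integrality of $\rho z\rho^{-1}$ is a nontrivial closed condition. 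Concretely, by \eqref{eq:str_N} the $\mbF$-points of $\mcN$ are indexed (in the étale direction) by arbitrary $O_E$-lattices $\Lambda\subset E^{n^{\et}}$, and the transported idempotents act integrally on such a point only when $\Lambda=e_0\Lambda\oplus e_1\Lambda$; a general lattice in a direct sum is not a direct sum of lattices, so the ambient RZ spaces do not factor.

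The correct order of operations, which is the paper's, is to decompose the intersection locus rather than the ambient space. Since $z_g^2/\pi$ is integral, $z_g$ is topologically nilpotent, so Lemma \ref{lem:Ig} applies and gives $\mcI(g)=\mcN\cap\mcZ(z_g)=\mcZ(y)$ after normalizing $g=\left(\begin{smallmatrix}1&\pi y\\ y&1\end{smallmatrix}\right)$ within its orbit. For a point $(Y,\iota,\rho)$ of $\mcZ(y)$, the element $\rho y\rho^{-1}$ is a genuine endomorphism, hence so is all of $\rho\,O_F[y^2]\,\rho^{-1}=\rho R\rho^{-1}$; only now do the idempotents of $R=R_0\times R_1$ split $(Y,\iota,\rho)$ as $(Y_0,\iota_0,\rho_0)\times(Y_1,\iota_1,\rho_1)$, yielding $\Gamma\backslash\mcI(g)\iso(\Gamma_0\backslash\mcI(g_0))\times_{\Spf O_{\breve F}}(\Gamma_1\backslash\mcI(g_1))$. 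With step one repaired in this way, your étale/lattice-count treatment of the $g_1$-factor goes through as written and matches the paper's conclusion.
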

\begin{proof}
All three quantities in \eqref{eq:factorization_geometric} only depend on the invariants of the three elements $g$, $g_0$ and $g_1$. So we may assume that, $g$ has the form $\left(\begin{smallmatrix} 1 & πy \\ y & 1\end{smallmatrix}\right)$. By Lemma \ref{lem:Ig},
$$\mcI(g) = \mcN\cap \mcZ\left(\left(\begin{smallmatrix}  & πy \\ y & \end{smallmatrix}\right)\right) = \mcZ(y)$$
where $\mcZ(y)\subset \mcN$ is the subspace of all $(Y, ι, ρ)$ such that $ρyρ^{-1}\in \End(Y)$. Moreover, since $y^2 = z_g^2/π$, the ring $R$ in Construction \ref{constr:factors} agrees with $O_F[y^2]$. Using its idempotents, every $(Y, ι, ρ)\in \mcZ(y)$ factors as $(Y_0, ι_0, ρ_0) \times (Y_1, ι_1, ρ_1)$ where the two triples lie in $\mcI(g_0)$ and $\mcI(g_1)$. Furthermore, we may choose $Γ\subset L^\times_g = F[πy^2]^\times$ as $Γ = Γ_0\times Γ_1$ with $Γ_i \subset F[z_{g_i}^2]^\times$. In this way, we obtain the factorization
\begin{equation}\label{eq:fact_abstract}
Γ\backslash \mcI(g) \iso (Γ_0 \backslash \mcI(g_0)) \times_{\Spf O_{\breve F}} (Γ_1 \backslash \mcI(g_1)).
\end{equation}
Since $\mbY_1$ is an étale $π$-divisible $O_E$-module, $\mcI(g_1)$ is étale over $\Spf O_{\breve F}$. Moreover, passing to Tate modules, $\mcI(g_1)(\mbF)$ may be identified with the set $\mcL(\varpi^{-1} z_{g_1})$ from Definition \ref{L_b}. The formal scheme $Γ_1\backslash \mcI(g_1)$ is then a disjoint union of $\Orb(g_1, 1_{GL_{n_1}(O_B)})$ many copies of $\Spf O_{\breve F}$. Thus \eqref{eq:factorization_geometric} follows from \eqref{eq:fact_abstract}.
\end{proof}

\subsection{Proof of Theorem \ref{thm:main}}
\label{ss:proof_AFL}
\begin{proof}[Proof of Theorem \ref{thm:main} (Vanishing Part)]
Let $γ\in G'_{\mr{rs}}$ be regular semi-simple with invariant $δ = \Inv(γ)$. We claim that there exists a strict $O_E$-module $\mbY$ over $\mbF$ and a matching element $g\in G_{1/2, \mbY, \mr{rs}}$ if and only if $\ord_{1/2}(δ) = 1$. Indeed, this condition is by definition equivalent to
\begin{equation}\label{eq:B_ord_1}
L_δ \iso L_0 \times L_1,\quad B_δ \iso (B_{1/2}\tensor_FL_0) \times B_1
\end{equation}
where $L_0$ is an étale $F$-algebra, $L_1$ a field extension of $F$, and $B_1/L_1$ the quaternion algebra that is not isomorphic to $B_{1/2}\tensor_FL_1$. Assuming this condition is met, let $\mbY$ be the unique strict $O_E$-module such that $n^\circ = [L_1:F]$ and $n^\et = [L_0:F]$. Then
\begin{equation}\label{eq:D_Y}
D_{1/2, \mbY} \iso M_{[L_0:F]}(B) \times D^\circ_{1/2, \mbY}.
\end{equation}
Here, $D^\circ := D_{1/2, \mbY}^\circ$ is a CSA of degree $2n^\circ$ with Hasse invariant $(n^\circ + 1)/2n^\circ$. There are two cases: If $n^\circ$ is even, then $B_1$ and $D^\circ$ are both division algebras. If $n^\circ$ is odd, then $B_1 = M_2(L_1)$ and $D^\circ$ is the ring of $(2\times 2)$-matrices over a CDA. In both cases, there exists an $F$-algebra embedding $B_1\to D^\circ$ and hence an $F$-algebra embedding $B_δ\to D_{1/2, \mbY}$. By \cite[Corollary 2.8]{LM2} (2), resp. its extension to semi-simple $F$-algebras, this is equivalent to the existence of an element $g\in G_{1/2, \mbY, \mr{rs}}$ with $\Inv(g;T) = δ(T)$.

Conversely, assume that there exists an embedding $β:B_δ \to D_{1/2, \mbY}$ where $D_{1/2, \mbY}$ is as in \eqref{eq:D_Y}. Then $β(B_δ)$ agrees with the centralizer of $β(L_δ)$ for dimension reasons \cite[Proposition 2.6 (3)]{LM2} which implies that $B_δ$ takes the form in \eqref{eq:B_ord_1}, and hence that $\mr{ord}_{1/2}(δ) = 1$. This finishes the prove of our claim.

The vanishing statement is now obtained as follows. Assume there is no $\mbY$ with matching $g\in G_{1/2, \mbY, \rs}$. By the claim, this means $\mr{ord}_{1/2}(δ) = 0$ or $\mr{ord}_{1/2}(δ) \geq 2$. In the first case, we apply the functional equation
$$\Orb(γ, f'_{1/2}, -s) = ε_{1/2}(γ) \Orb(γ, f'_{1/2}, s)$$
from \eqref{eq:fctl_equation}. The sign $ε_{1/2}(γ)$ can be seen to equal $(-1)^{\mr{ord}_{1/2}(δ)}$. Thus $\del (γ, f'_{1/2}) = 0$ if $\mr{ord}_{1/2}(δ) = 0$. In the second case, the vanishing of $\del(γ, f'_{1/2})$ is implied by Corollary \ref{cor:vanishing}. 
\end{proof}

\begin{proof}[Proof of Theorem \ref{thm:main} (Reduction to the linear AFL)] Fix $\mbY$ and a regular semi-simple element $g\in G_{1/2, \mbY, \mr{rs}}$. Let $γ\in G'_{\mr{rs}}$ be a matching element. Assuming the linear AFL for all strict $O_E$-modules with the same connected height $n^\circ$, we need to see that
\begin{equation}\label{eq:AFL_to_show}
\del (γ, f'_{1/2}) = 2\,\mr{Int}(g)\log(q).
\end{equation}
The condition that $γ$ and $g$ match is by definition equivalent to assuming that $z_γ^2$ and $z_g^2$ have the same characteristic polynomial. Thus $z_γ^2/π$ is integral if and only if $z_g^2/π$ is integral. By Corollary \ref{vanish_par} and by Lemma \ref{lem:integrality_y}, both sides of \eqref{eq:AFL_to_show} vanish if these elements are not integral. So from now on we assume that $z_g^2/π$ and $z_γ^2/π$ are integral.

We may assume that $γ$ and $g$ take the form $γ = 1 + z_γ$ and $g = 1 + z_g$. Let $γ = (γ_0, γ_1)$ and $g = (g_0, g_1)$ be the components of $γ$ and $g$ such that $z^2_{γ_0}/π$ and $z^2_{g_0}/π$ are topologically nilpotent, and $z_{γ_1}^2/π$ and $z_{g_1}^2/π$ integrally invertible. Then $γ_0$ matches $g_0$ and $γ_1$ matches $g_1$.

We write $f'_{1/2, n_0}$ and $f'_{1/2, n_1}$ for the Parahoric test functions on $GL_{2n_0}(F)$ and $GL_{2n_1}(F)$. By Lemma \ref{aux1lattice}, $\del (γ_1, f'_{1/2,n_1}) = 0$. By Corollary \ref{reduction}, we hence obtain that
\begin{equation}\label{eq:fact_gamma}
\del (γ, f'_{1/2}) = \del(γ_0, f'_{1/2, n_0}) \Orb(γ_1, f'_{1/2, n_1}).
\end{equation}
By Proposition \ref{prop:factorization_geometric}, we also have the factorization
\begin{equation}\label{eq:fact_g}
\mr{Int}(g) = \mr{Int}(g_0)\Orb(g_1, 1_{GL_{n_1}(O_B)}).
\end{equation}
By the fundamental lemma (Theorem \ref{thm:FL}),
$$\Orb(γ_1, f'_{1/2, n_1}) = \Orb(g_1, 1_{GL_{n_1}(O_B)}).$$
Thus, Identity \eqref{eq:AFL_to_show} follows if we can prove
$$\del(γ_0, f'_{1/2, n_0}) = 2\,\mr{Int}(g_0)\log(q).$$
Since $z_{γ_0}^2/π$ and $z_{g_0}^2/π$ are topologically nilpotent, by Theorem \ref{thm:auxiliary}, there are two elements $\wt {γ}_0 \in GL_{2n_0}(F)_{\mr{rs}}$ and $\wt g_0 \in G_{0, \mbY_0, \mr{rs}}$ such that
$$\del(\wt {γ}_0, 1_{GL_{2n_0}(O_F)}) = \del(γ_0, f'_{1/2, n_0}) \quand \mr{Int}(\wt g_0) = \mr{Int}(g_0).$$
The linear AFL for $\mbY_0$, whose connected part has height $n^\circ$, precisely states that
$$\del(\wt {γ}_0, 1_{GL_{2n_0}(O_F)}) = 2\,\mr{Int}(\wt g_0)\log(q)$$
and the proof is complete.
\end{proof}


\begin{thebibliography}{99}

\bibitem{Drinfeld}
V. G. Drinfeld, \emph{Coverings of $p$-adic symmetric domains}, Funkcional. Anal. i Priložen. \textbf{10} (1976), no. 2, 29--40.


%
%
 
\bibitem{Guo}
J. Guo, \emph{On a generalization of a result of Waldspurger}, Canad. J. Math. \textbf{48} (1996), no. 1, 105--142.
%


\bibitem{HL}
B. Howard, Q. Li, \emph{Intersections in Lubin--Tate space and biquadratic Fundamental Lemmas}, Preprint (2020), arXiv:2010.07365.

\bibitem{JR}
H. Jacquet, S. Rallis, \emph{Uniqueness of linear periods}, Compositio Math. \textbf{102} (1996), no. 1, 65--123. 

\bibitem{K}
R. Kottwitz, \emph{Base change for unit elements of Hecke algebras}, Compositio Math. \textbf{60} (1986), no.2, 237–250.

%
%

\bibitem{LXZ}
S. Leslie, J. Xiao, W. Zhang, \emph{Periods and heights for unitary symmetric pairs}, in preparation.


\bibitem{Li}
Q. Li, \emph{An intersection number formula for CM cycles in Lubin--Tate towers}, Preprint (2018), arXiv:1803.07553, to appear in Duke Math. J.

\bibitem{Li_GL4}
Q. Li, \emph{A computational proof of the linear Arithmetic Fundamental Lemma for $GL_4$}, Preprint (2019), arXiv:1907.00090, to appear in Canad. J. Math.

\bibitem{Li_future}
Q. Li, \emph{A computational proof for the biquadratic Linear AFL for $GL_4$}, in preparation.

\bibitem{LM1}
Q. Li, A. Mihatsch, \emph{On the linear AFL: The non-basic case},
Preprint (2022), arXiv:2208.10144.

\bibitem{LM2}
Q. Li, A. Mihatsch, \emph{Arithmetic Transfer for inner forms of $GL_{2n}$}, Preprint (2023), arXiv:2307.11716.

%

\bibitem{M_loc_const}
A. Mihatsch, \emph{Local Constancy of Intersection Numbers}, Algebra Number Theory, \textbf{16} (2022), no. 2, 505--519.

\bibitem{MZ}
A. Mihatsch, W. Zhang, \emph{On the Arithmetic Fundamental Lemma Conjecture over a general $p$-adic field}, Preprint (2021).

%
%
%

%
%
%
%


\bibitem{Z}
W. Zhang, \emph{On arithmetic fundamental lemmas}, Invent. Math. \textbf{188} (2012), no. 1, 197--252.
%

\bibitem{Z_survey_18}
W. Zhang, \emph{Periods, cycles, and L-functions: a relative trace formula approach}, Proceedings of the International Congress of Mathematicians---Rio de Janeiro 2018. Vol. II. Invited lectures, 487--521, World Sci. Publ., Hackensack, NJ, 2018.
 
\bibitem{Z19}
W. Zhang, \emph{Weil representation and arithmetic fundamental lemma}, Ann. of Math. (2) \textbf{193} (2021), no. 3, 863--978.
%

\bibitem{ZZ}
Z. Zhang, \emph{Maximal parahoric arithmetic transfers, resolutions and modularity} Preprint (2021), arXiv:2112.11994.


\end{thebibliography}
\end{document}